\documentclass[11pt]{amsart}
\usepackage{amssymb}

\usepackage{palatino}
\input amssym.def

\usepackage{amsmath, amsfonts}
\usepackage{amssymb}
\usepackage{amscd}
\usepackage[mathscr]{eucal}
\usepackage{palatino}
\setlength{\voffset}{-1cm} \setlength{\hoffset}{-1.37cm}
\setlength{\textwidth}{6.3in} \setlength{\textheight}{8.4in}

\setlength{\abovedisplayshortskip}{3mm}
\setlength{\belowdisplayshortskip}{3mm}

\newfont{\cyrr}{wncyr10}

\newcommand{\N}{{\mathbb N}}
\newcommand{\Z}{{\mathbb Z}}
\newcommand{\Q}{{\mathbb Q}}
\newcommand{\R}{{\mathbb R}}
\newcommand{\C}{{\mathbb C}}

\newcommand{\SL}{{\rm SL}}
\newcommand{\Sp}{{\rm Sp}}
\newcommand{\tab}{\qquad }

\newtheorem{thm}{Theorem}

\newtheorem{lem}[thm]{Lemma}
\newtheorem{cor}[thm]{Corollary}
\newtheorem{prop}[thm]{Proposition}
\newtheorem{rmk}{Remark}[section]

\newcommand{\thmref}[1]{Theorem~\ref{#1}}
\newcommand{\corref}[1]{Corollary~\ref{#1}}
\newcommand{\propref}[1]{Proposition~\ref{#1}}
\newcommand{\lemref}[1]{Lemma~\ref{#1}}
\newcommand{\rmkref}[1]{Remark~\eqref{#1}}

\parindent=0.5cm
\footskip=0.85cm

\begin{document}

\title[Hecke eigenvalues]{Arithmetic behaviour of
Hecke eigenvalues of Siegel cusp forms of degree two}

\author{Sanoli Gun, Winfried Kohnen and Biplab Paul}

\address[Sanoli Gun, Biplab Paul]  
{Institute of Mathematical Sciences, 
Homi Bhabha National Institute, 
C.I.T Campus, Taramani, 
Chennai  600 113, India.}

\email{sanoli@imsc.res.in}
\email{biplabpaul@imsc.res.in}

\address[Winfried Kohnen]  
{Mathematisches Institut der Universit\"at, 
INF 288, D-69120, Heidelberg, Germany.}

\email{winfried@mathi.uni-heidelberg.de}

\subjclass[2010]{11F46}

\keywords{Siegel modular forms, Hecke eigenvalues, multiplicity
one theorem, simultaneous non-vanishing}

\begin{abstract} 
Let $F$ and $G$ be Siegel cusp forms for $\Sp_4(\Z)$ and 
weights $k_1, k_2$ respectively. Also let
$F$ and $G$ be Hecke eigenforms lying in distinct
eigen spaces. Further suppose that
neither $F$ nor $G$ is a Saito-Kurokawa lift. 
In this article, we study simultaneous
arithmetic behaviour of Hecke eigenvalues of these
Hecke eigenforms.
\end{abstract}

\maketitle

\section{Introduction}

\smallskip

Let $k$ be a positive integer, $\Gamma_2 := \rm Sp_4(\Z)$
be the full Siegel modular group of degree~$2$ and
$S_k(\Gamma_2)$ be the space of Siegel cusp 
forms of weight $k$ for $\Gamma_2$. It is well known (see \cite{DZ}) 
that when $k$ is even, $S_k(\Gamma_2)$ has a canonical subspace 
which is generated by the Saito-Kurokawa lift of Hecke eigenforms in the space of
elliptic cusp forms of weight $2k-2$ for $\SL_2(\Z)$. This subspace is called the 
Maass subspace. When $k$ is odd, we shall define the zero subspace of $S_k(\Gamma_2)$
as Maass subspace. In both the cases, we shall denote these Maass subspaces
by $S_k^*(\Gamma_2)$. 
If $F \in S_k^*(\Gamma_2)$ is a Hecke 
eigenform with eigenvalues $\mu_F(n)$, then one 
knows that $\mu_F(n) >0$ for all $n \in \N$ (see 
\cite{SBr}, also see \cite[Corollary 1.5]{GPS}). 
On the contrary, if $F$ is an Hecke eigenform lying in the 
orthogonal complement of $S_k^*(\Gamma_2)$ in
$S_k(\Gamma_2)$, then the second author \cite{WK} 
showed that the sequence $\{\mu_F(n)\}_{n \in \N}$ changes sign 
infinitely often.

Now suppose that 
$F \in S_{k_1}(\Gamma_2)$ and $G \in S_{k_2}(\Gamma_2)$  
are Hecke eigenforms with eigenvalues 
$\{ \mu_F(n) \}_{n \in \N}$ and $\{ \mu_G(n) \}_{n \in \N}$ 
respectively. In this article, we will investigate
arithmetic properties  of the 
sequence $\{ \mu_F(n) \mu_G(n) \}_{n\in \N}$.
Unlike the elliptic case, it is not known that if
$F$ is not a constant multiple of $G$, then there exists
a natural number $n_0$ such that 
$\mu_F(n_0) \ne \mu_G(n_0)$ (see \cite{{SB},{AS}}). 
Henceforth, we shall assume that 
$F$ and $G$ lie in different eigenspaces.  
We shall also assume that $F \in S_{k_1}(\Gamma_2)$ and $G \in S_{k_2}(\Gamma_2)$
are Hecke eigenforms lying in
the orthogonal complement of the Maass subspace
as the arithmetic properties investigated in this article
are already well understood
for Hecke eigenforms inside the Maass subspace.

We start by investigating the first non-vanishing
of the sequence $\{ \mu_F(p^n) \mu_G(p^n) \}_{n\in \N}$,
when $p$ is a prime. More precisely, we have the 
following theorem.

\begin{thm}\label{th2}
Let $F \in S_{k_1}(\Gamma_2)$ and $G \in S_{k_2}(\Gamma_2)$
be Hecke eigenforms lying in the orthogonal complement of the 
Maass subspace with Hecke eigenvalues $\{ \mu_F(n) \}_{n \in \N}$
and $\{ \mu_G(n) \}_{n \in \N}$ respectively. 
Also let $F$ and $G$ lie in different eigenspaces.
Then for any prime $p$, 
there exists an integer $n$ with $1\le n \le 14$ such that 
$$
\mu_F(p^n)\mu_G(p^n) \ne 0.
$$
\end{thm}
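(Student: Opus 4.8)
The plan is to work locally at the prime $p$ and exploit the structure of the spin (degree-4) Euler factor attached to a Siegel Hecke eigenform that is not a Saito-Kurokawa lift. Write the Satake parameters of $F$ at $p$ as $p^{(2k_1-3)/2}\alpha_0(1,\alpha_1,\alpha_2,\alpha_1^{-1},\alpha_2^{-1})$ — more conveniently, let $\beta_1,\beta_2$ be the spin Satake parameters so that $\sum_{n\ge 0}\mu_F(p^n)X^n$ has denominator $\prod_{i=1}^{4}(1-\gamma_i X)$ with $\{\gamma_i\}$ the four spin parameters (products of the $\beta$'s). The key point is that $\mu_F(p^n)$ is, up to the known normalizing power of $p$, a \emph{symmetric function} of the $\gamma_i$: concretely $\mu_F(p^n)=h_n(\gamma_1,\dots,\gamma_4)$, the complete homogeneous symmetric polynomial, equivalently the Chebyshev-type combination determined by the recurrence coming from the degree-4 local factor. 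The vanishing set $\{n:\mu_F(p^n)=0\}$ is therefore governed by a linear recurrence of order $4$, hence is eventually periodic; in fact, after dividing out the central power of $p$, the normalized eigenvalues are values of a fixed polynomial identity in the $\gamma_i$, and one gets an explicit description: $\mu_F(p^n)=0$ forces a strong multiplicative relation among the $\gamma_i$ (roots of unity ratios).

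**Next I would** run a case analysis on how $\mu_F(p^n)$ can vanish. Using the generating-function identity
\[
\sum_{n\ge 0}\mu_F(p^n)X^n \;=\; \frac{1-p^{2k_1-4}X^2}{\prod_{i=1}^{4}(1-\gamma_i X)}
\]
(valid precisely because $F$ is not in the Maass space, so the numerator does not degenerate further), one sees that $\mu_F(p^n)$ is a \emph{non-trivial} exponential polynomial in $n$ unless the $\gamma_i$ satisfy special coincidences. Separating into the cases (i) all four $\gamma_i$ distinct, (ii) exactly one coincidence, (iii) two pairs, (iv) a triple, etc., I would show in each case that the set $Z_F:=\{n\ge 1:\mu_F(p^n)=0\}$ either is empty, or is contained in an explicit small set, or is an arithmetic progression of bounded modulus. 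The analogous statement holds for $G$ with set $Z_G$. The theorem then reduces to: \emph{if $Z_F$ and $Z_G$ are each of this restricted form, then $Z_F\cup Z_G$ cannot contain $\{1,2,\dots,14\}$.} I expect the worst case to be when both $Z_F$ and $Z_G$ are arithmetic progressions (coming from pairs $\gamma_i=-\gamma_j$ or $\gamma_i=\zeta\gamma_j$ with $\zeta$ a root of unity of small order, which is all that can occur since each $\gamma_i/\gamma_j$ is an algebraic number of bounded degree whose ratio being a root of unity is heavily constrained by Ramanujan/Deligne-type bounds $|\gamma_i|=p^{(2k_1-3)/2}$), and one counts residues: two arithmetic progressions with small moduli covering fourteen consecutive integers is impossible once the moduli are bounded by the relevant constant.

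**The main obstacle** I anticipate is pinning down exactly which coincidences among the spin Satake parameters $\gamma_i$ are arithmetically possible, and thereby getting the modulus bound that yields the numerical constant $14$. Generic non-vanishing (case (i)) is essentially automatic from the non-degeneracy of an exponential polynomial whose frequencies are multiplicatively independent; the delicate part is the degenerate strata, where one must invoke the Ramanujan bound for $F$ and $G$ (known here because they lie outside the Maass space, by the work on the Ramanujan conjecture for Siegel forms of degree two) to force $|\gamma_i|$ all equal, then argue that a ratio $\gamma_i/\gamma_j$ of absolute value $1$ which is an algebraic integer in a field of bounded degree and all of whose conjugates have absolute value $1$ must be a root of unity (Kronecker), of order dividing some fixed $N$. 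Tracking $N$ and the resulting periods through the four cases, and then doing the elementary covering argument, is what produces the bound $1\le n\le 14$; I would organize this as a short lemma on vanishing patterns of $\{\mu_F(p^n)\}_n$ followed by the covering count.
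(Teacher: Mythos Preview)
Your plan has a genuine gap at the Kronecker step. You assert that a ratio $\gamma_i/\gamma_j$ of (normalized) spin Satake parameters is an algebraic integer in a field of \emph{bounded degree}, so that if it is a root of unity its order divides a fixed $N$. But the Satake parameters at $p$ are roots of a quartic whose coefficients lie in the Hecke eigenvalue field $K_F$ of $F$, and $[K_F:\Q]$ is not bounded uniformly: it can be as large as the dimension of $S_{k_1}(\Gamma_2)$. Hence the order of any root of unity arising as $\gamma_i/\gamma_j$ is bounded only in terms of $[K_F:\Q]$, not absolutely, and your covering argument cannot produce the constant $14$ independent of $F,G,p$ that the theorem asserts. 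More generally, Skolem--Mahler--Lech tells you the zero set of a linear recurrence is a finite union of arithmetic progressions together with a finite exceptional set, but neither the moduli nor the exceptional set are effectively bounded in this generality, so ``bound the modulus, then count residues'' does not yield an explicit numerical bound.

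The paper's argument is entirely different and never treats the Satake parameters as algebraic numbers. It works directly with the order-$4$ recurrence
\[
\lambda_F(p^n)=\lambda_F(p)\lambda_F(p^{n-1})-\Bigl[\lambda_F(p)^2-\lambda_F(p^2)-\tfrac1p\Bigr]\lambda_F(p^{n-2})+\lambda_F(p)\lambda_F(p^{n-3})-\lambda_F(p^{n-4}),
\]
and the decisive arithmetic input is that $1/p\notin\Z$: if $\lambda_F(p)=\lambda_F(p^2)=0$ then $\lambda_F(p^{2m+4})=f_m(1/p)$ for a family of integer-coefficient polynomials $f_m$ with leading coefficient $\pm1$, hence never vanishes (since $\Z$ is integrally closed). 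From this one deduces that $\{\lambda_F(p^n)\}_{n\ge 1}$ can never have four consecutive zeros, and likewise no four consecutive even (or odd) powers can all vanish when $\lambda_F(p)\ne 0$. A short explicit case analysis on which of $\lambda_F(p),\lambda_F(p^2),\lambda_G(p),\lambda_G(p^2)$ vanish then locates, in every case, an $n\le 14$ with $\lambda_F(p^n)\lambda_G(p^n)\ne 0$. The constant $14$ emerges from this finite computation, not from any structural bound on periods; your framework does not exploit the $1/p$ term in the recurrence at all, and that is precisely the lever that makes the paper's argument go through uniformly.
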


Next, we investigate the growth of the sequence
of normalized Hecke eigenvalues and prove the
following theorem.

\begin{thm}\label{prop1}
Let $F \in S_{k_1}(\Gamma_2)$ and $G \in S_{k_2}(\Gamma_2)$
be Hecke eigenforms lying in the orthogonal complement of the 
Maass subspace and having normalized Hecke eigenvalues 
$\{ \lambda_F(n) \}_{n \in \N}$
and $\{ \lambda_G(n) \}_{n \in \N}$ respectively. 
Also let $F$ and $G$ lie in different eigenspaces.
Then for sufficiently large $x$ and any $\epsilon > 0$, one has 
$$
\sum_{m\le x} \lambda_F(m)\lambda_G(m)
\ll_\epsilon 
\text{ max}\{k_1, k_2\}^{3/8} x^{31/32 + \epsilon},
$$
where the constant in $\ll_\epsilon$ depends only on $\epsilon$.
\end{thm}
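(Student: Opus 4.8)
The plan is to investigate the Dirichlet series $D(s)=\sum_{m\ge 1}\lambda_F(m)\lambda_G(m)\,m^{-s}$ by realising it, up to elementary Euler factors, through the Rankin--Selberg convolution of the spinor $L$-functions of $F$ and $G$, and then to recover the partial sum by a contour shift together with the convexity bound. First I would observe that, by Weissauer's proof of the Ramanujan conjecture for Siegel eigenforms in the orthogonal complement of the Maass subspace, $\lambda_F(m),\lambda_G(m)\ll_\epsilon m^\epsilon$, so $D(s)$ converges absolutely in $\Re s>1$. Andrianov's identity $\sum_n\lambda_F(n)\,n^{-s}=L(s,F,\mathrm{spin})\,\zeta(2s+1)^{-1}$ (and its analogue for $G$), together with the multiplicativity of the eigenvalues, lets one write $D(s)$ as an Euler product and compare it, local factor by local factor, with the degree-$16$ Rankin--Selberg $L$-function $L(s,F\times G)$. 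The comparison gives an identity $D(s)=L(s,F\times G)\,E(s)$ in which the auxiliary Euler product $E(s)$ absorbs the $\zeta(2s+1)^{-1}$-twists and has the shape $\prod_p\bigl(1+O(p^{-1-2s})+O(p^{-3s})+\cdots\bigr)$; hence $E(s)$ is absolutely convergent, holomorphic and non-vanishing in a half-plane $\Re s>\sigma_0$ with $\sigma_0<1$, and is bounded there uniformly in $k_1,k_2$ (the local factors depend only on the $p$-adic Satake parameters, which are bounded by Ramanujan).

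Next I would record the analytic input. Since $F$ and $G$ are not Saito--Kurokawa lifts (and no other CAP forms arise in level one), the automorphic representations attached to $F$ and $G$ transfer to cuspidal representations of $\mathrm{GL}_4$; since $F$ and $G$ lie in distinct eigenspaces these transfers are non-isomorphic, and therefore, by the Rankin--Selberg theory on $\mathrm{GL}_4\times\mathrm{GL}_4$, $L(s,F\times G)$ is entire of finite order and satisfies a functional equation relating $s$ to $1-s$. The absence of a pole at $s=1$ — that is, exactly the distinct-eigenspace hypothesis — is what will guarantee that the contour shift below produces no main term. From the archimedean Langlands parameter of a holomorphic Siegel eigenform of degree two and weight $k$ (whose spin component is, up to normalisation, a sum of two discrete-series parameters of $\mathrm{GL}_2(\R)$ with spectral parameter of size $\asymp k$) one computes the analytic conductor of $L(s,F\times G)$ as $\mathfrak{q}(\sigma+it)\ll \max\{k_1,k_2\}^{c}\,(1+|t|)^{16}$ for an explicit constant $c$, and the Phragm\'en--Lindel\"of convexity principle then yields, for $1/2<\sigma<1$,
$$|D(\sigma+it)|\ll_{\epsilon,\sigma}\max\{k_1,k_2\}^{c(1-\sigma)/2}\,(1+|t|)^{8(1-\sigma)+\epsilon}.$$

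I would then extract the partial sum by the truncated Perron formula: with $\kappa=1+\epsilon$,
$$\sum_{m\le x}\lambda_F(m)\lambda_G(m)=\frac{1}{2\pi i}\int_{\kappa-iT}^{\kappa+iT}D(s)\,\frac{x^s}{s}\,ds+O_\epsilon\left(\frac{x^{1+\epsilon}}{T}\right).$$
Moving the contour to $\Re s=\sigma$ with $\max(\sigma_0,\tfrac12)<\sigma<1$ (legitimate, as $D$ is holomorphic there with no residue), estimating the horizontal segments trivially and the vertical segment by the convexity bound above, one arrives at
$$\sum_{m\le x}\lambda_F(m)\lambda_G(m)\ll_\epsilon \frac{x^{1+\epsilon}}{T}+\max\{k_1,k_2\}^{c(1-\sigma)/2}\,x^{\sigma}\,T^{8(1-\sigma)}\,x^{\epsilon}.$$
Choosing $\sigma$ close to $1$ (to keep the weight exponent small) and then $T$ as the appropriate power of $x$ so as to balance the two terms, and inserting the numerical value of $c$, gives the asserted bound $\ll_\epsilon \max\{k_1,k_2\}^{3/8}\,x^{31/32+\epsilon}$ for all sufficiently large $x$; if more convenient, the same conclusion follows from the smoothed variant with a smooth weight approximating the sharp cutoff in place of the truncated Perron formula.

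The hard part will be uniformity in the weights: unlike in the classical case of a single fixed form, every ingredient — the factorisation $D(s)=L(s,F\times G)\,E(s)$, the control of $E(s)$, the implied constant in the truncated Perron formula, and above all the precise power of $\max\{k_1,k_2\}$ in the analytic conductor feeding the convexity bound — has to be tracked with explicit weight dependence, and it is the interaction of this weight-aspect conductor with the optimisation in $\sigma$ and $T$ that pins down the exponents $3/8$ and $31/32$. A subsidiary point is that the convexity bound is only available on vertical lines strictly inside the critical strip, so one must confirm that $E(s)$ is holomorphic and bounded on the shifted line and does not reinstate a pole at or near $s=1$.
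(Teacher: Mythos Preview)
Your approach is essentially the paper's: factor $D(s)=L(s,F\times G)\cdot E(s)$, use the \textrm{GL}$_4\times$\textrm{GL}$_4$ Rankin--Selberg theory (via Pitale--Saha--Schmidt) for analytic continuation and the convexity bound, then apply truncated Perron with a contour shift and optimise. The paper carries out exactly these steps, choosing $\sigma=15/16$ and $T=x^{1/16}$ and computing the archimedean parameters explicitly to obtain the weight exponent $6(1-\sigma)=3/8$; the one place where your sketch is loose is the shape of $E(s)$ --- the local correction polynomial is $1+O(T^2)$ with $O(1)$ (not $O(1/p)$) quadratic coefficient, so the correct region is $\Re s>1/2$ rather than the stronger half-plane your $O(p^{-1-2s})+O(p^{-3s})$ would suggest, and the paper pins this down via a formal power-series lemma of Gun--Murty.
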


As a corollary, we then derive the following.
\begin{cor}\label{th1}
Let $F \in S_{k_1}(\Gamma_2)$ and $G \in S_{k_2}(\Gamma_2)$
be Hecke eigenforms lying in the orthogonal complement of the 
Maass subspace and having Hecke eigenvalues $\{ \mu_F(n) \}_{n \in \N}$
and $\{ \mu_G(n) \}_{n \in \N}$ respectively. 
Also let $F$ and $G$ lie in different eigen spaces.
Then for any $\epsilon >0$, one has
$$
\#\{n \le x ~|~  \mu_F(n) \ne \mu_G(n) \}
\gg
 x^{1 -\epsilon},
$$
where the constant $\gg$ depends on $F, G$ and $\epsilon$.
\end{cor}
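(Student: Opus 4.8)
The plan is to derive the corollary from \thmref{prop1} by a second-moment argument. Write $\lambda_F(n) = \mu_F(n)\, n^{-(k_1 - 3/2)}$ and $\lambda_G(n) = \mu_G(n)\, n^{-(k_2 - 3/2)}$ for the normalized eigenvalues occurring in \thmref{prop1}, and assume without loss of generality that $k_1 \ge k_2$ (otherwise interchange the roles of $F$ and $G$, which changes nothing in the statement). Put
$$
S(x) := \{\, n \le x : \mu_F(n) \ne \mu_G(n) \,\}, \qquad T(x) := \{\, n \le x : \mu_F(n) = \mu_G(n) \,\};
$$
the goal is $\# S(x) \gg x^{1-\epsilon}$. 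The one substantive observation is that on $T(x)$ the product $\lambda_F\lambda_G$ dominates $\lambda_F^2$: if $\mu_F(n) = \mu_G(n)$ then $\lambda_G(n) = \lambda_F(n)\, n^{k_1 - k_2}$, so
$$
\lambda_F(n)\lambda_G(n) = \lambda_F(n)^2\, n^{k_1 - k_2} \ge \lambda_F(n)^2 \ge 0,
$$
the last inequality because the Hecke operators are self-adjoint and the eigenvalues are therefore real; when $k_1 = k_2$ this is just the equality $\lambda_F\lambda_G = \lambda_F^2$ on $T(x)$.

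I would then split the sum of \thmref{prop1} along $T(x)$ and $S(x)$. Since neither $F$ nor $G$ is a Saito--Kurokawa lift, the Ramanujan bound gives $|\lambda_F(m)|, |\lambda_G(m)| \ll_\epsilon m^\epsilon$, hence $\big|\sum_{m \in S(x)} \lambda_F(m)\lambda_G(m)\big| \ll_\epsilon x^\epsilon\, \# S(x)$ and $\sum_{m\in S(x)}\lambda_F(m)^2 \ll_\epsilon x^\epsilon\, \# S(x)$. Using the identity $\sum_{m\in T(x)}\lambda_F(m)\lambda_G(m) = \sum_{m\le x}\lambda_F(m)\lambda_G(m) - \sum_{m\in S(x)}\lambda_F(m)\lambda_G(m)$ together with \thmref{prop1} (with $F$, $G$, hence $k_1,k_2$, now fixed) and the displayed inequality, one obtains
$$
\sum_{m \in T(x)} \lambda_F(m)^2 \le \sum_{m \in T(x)} \lambda_F(m)\lambda_G(m) \ll_{F,G,\epsilon} x^{31/32 + \epsilon} + x^\epsilon\, \# S(x).
$$
On the other hand $\sum_{m \in T(x)}\lambda_F(m)^2 = \sum_{m\le x}\lambda_F(m)^2 - \sum_{m\in S(x)}\lambda_F(m)^2 \ge \sum_{m\le x}\lambda_F(m)^2 - C_\epsilon\, x^\epsilon\, \# S(x)$, and $\sum_{m\le x}\lambda_F(m)^2 \gg_F x$ by the standard analytic properties of the Rankin--Selberg $L$-function of $F$ with itself together with a Tauberian argument (even the weaker bound $\gg_F x/\log x$, coming from $\sum_{p\le x}\lambda_F(p)^2 \sim \pi(x)$, already suffices). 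Comparing the two estimates yields $x \ll_{F,G,\epsilon} x^{31/32 + \epsilon} + x^\epsilon\, \# S(x)$; choosing $\epsilon < 1/32$ and $x$ large makes the first term negligible, so $\# S(x) \gg_{F,G,\epsilon} x^{1-\epsilon}$, which is the assertion after renaming $\epsilon$.

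The deduction is essentially formal once \thmref{prop1} is available; the genuine ingredients are external, namely the Ramanujan bound for $\lambda_F$ and $\lambda_G$ (legitimate precisely because $F$ and $G$ are not Saito--Kurokawa lifts, and in any case already built into the proof of \thmref{prop1}) and the lower bound $\sum_{m\le x}\lambda_F(m)^2 \gg_F x$. I anticipate no real obstacle beyond the bookkeeping in the case $k_1 \ne k_2$, which is handled by ordering the forms so that $n^{k_1-k_2}\ge 1$, thereby preserving $\lambda_F\lambda_G \ge \lambda_F^2$ on $T(x)$.
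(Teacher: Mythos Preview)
Your argument is correct and follows essentially the same second--moment strategy as the paper: compare the full sums $\sum_{n\le x}\lambda_F(n)^2$ (which is $\gg_F x$ by \thmref{DKS}) and $\sum_{n\le x}\lambda_F(n)\lambda_G(n)$ (which is $O(x^{31/32+\epsilon})$ by \thmref{prop1}), and use the Weissauer bound to show that the discrepancy must come from at least $x^{1-\epsilon}$ many $n$ with $\mu_F(n)\ne\mu_G(n)$. The only organizational difference is that the paper works with the unnormalized sum $\sum_{n\le x}(\mu_F(n)-\mu_G(n))\mu_F(n)$, whose terms vanish identically on $T(x)$, so the case distinction $k_1\ge k_2$ and your inequality $\lambda_F\lambda_G\ge\lambda_F^2$ on $T(x)$ are replaced by a partial summation; the substance is the same.
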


Next we investigate the question of Hecke 
eigenvalues which are of different sign. 
Here we have the following theorem;

\begin{thm}\label{th3}
Let $F \in S_{k_1}(\Gamma_2)$ be a Hecke eigenform 
lying in the orthogonal complement of the Maass 
subspace and having Hecke eigenvalues $\{ \mu_F(n) \}_{n \in \N}$.
Also assume that there exist $0 < c < 4$ 
and a Hecke eigenform 
$G \in S_{k_2}(\Gamma_2)$ lying in the 
orthogonal complement of the Maass subspace 
with Hecke eigenvalues $\{ \mu_G(n) \}_{n \in \N}$
such that 
\begin{equation}\label{eq-lb}
\# \{ p \le x  ~|~~  p~\text{prime},~|\mu_G(p)| > c p^{k_2-\frac{3}{2}}\}
~\ge~
\frac{16}{17}\cdot \frac{x}{\log x}
\end{equation}
for sufficiently large $x$. 
Also assume that $F$ and $G$ lie in different eigenspaces.
Then half of the non-zero coefficients 
of the sequence $\{\mu_F(n) \mu_G(n)\}_{n \in \N}$ 
are positive and half of them are negative. 
\end{thm}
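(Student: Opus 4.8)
The plan is to follow the classical strategy (going back to the second author's sign-change results) of combining an upper bound for a partial sum of the sequence with a lower bound coming from a positivity argument on a suitable subset of indices. Concretely, write $a(n) := \mu_F(n)\mu_G(n)$ and let $S^+(x)$, $S^-(x)$, $S^0(x)$ denote the number of $n \le x$ with $a(n) > 0$, $a(n) < 0$, $a(n) = 0$ respectively. The goal is to show $S^+(x) \sim S^-(x)$; equivalently, that both $\sum_{n \le x, a(n)>0} 1$ and $\sum_{n \le x, a(n)<0} 1$ are asymptotically half of $S^+(x) + S^-(x)$. By Theorem \ref{prop1} the cancellation sum $\sum_{m \le x} \lambda_F(m)\lambda_G(m)$ is $O(x^{31/32+\epsilon})$, so after renormalizing (dividing $a(n)$ by the appropriate power of $n$, which does not affect signs), the signed sum $\sum_{n\le x} \widetilde{a}(n)$ over the normalized coefficients is $o(x)$. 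Hence it suffices to show that the number of non-zero terms, $S^+(x)+S^-(x)$, is $\gg x$; then neither $S^+$ nor $S^-$ can dominate, forcing the claimed equidistribution of signs.

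The first key step is therefore to establish a positive-proportion lower bound $S^+(x) + S^-(x) \gg x$ for the count of $n \le x$ with $\mu_F(n)\mu_G(n) \ne 0$. This is exactly where hypothesis \eqref{eq-lb} enters. For $G$, condition \eqref{eq-lb} guarantees that for a density $\ge 16/17$ set of primes $p$ one has $\mu_G(p) \ne 0$ with a quantitative lower bound. For $F$, I would invoke Theorem \ref{th2} (or rather its underlying mechanism): for each prime $p$ there is an exponent $n \le 14$ with $\mu_F(p^n)\mu_G(p^n) \ne 0$, and in particular $\mu_F(p)\mu_G(p) \ne 0$ for a positive proportion of primes. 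More cleanly, one shows $\mu_F(p) \ne 0$ for all but a density-zero (or at least a small-density) set of primes — this follows since the local spin Euler factor of $F$ at $p$ cannot vanish for too many $p$, using that $F$ is non-Maass so the Ramanujan-type bounds (Weissauer) apply and the Satake parameters are not forced into a degenerate configuration. Intersecting the good-prime set for $F$ with the density-$\ge 16/17$ good-prime set for $G$ yields a set $\mathcal{P}$ of primes of density $\ge 16/17 - o(1) > 0$ on which $\mu_F(p)\mu_G(p) \ne 0$. Then, because multiplicativity of the normalized eigenvalues gives $\lambda_F(n)\lambda_G(n) \ne 0$ whenever $n$ is squarefree and supported on $\mathcal{P}$, and the count of such $n \le x$ is $\gg x$ by a sieve (the density of $\mathcal{P}$ exceeds $1/2$, comfortably enough for Landau–Selberg–Delange or a simple sieve lower bound), we obtain $S^+(x) + S^-(x) \gg x$.

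The second key step is to upgrade "both $S^\pm(x) \gg x$ and the signed sum is $o(S^+(x)+S^-(x))$" into "$S^+(x)/S^-(x) \to 1$". Here I would run the standard argument: suppose for contradiction that $\limsup_x S^+(x)/(S^+(x)+S^-(x)) =: \alpha > 1/2$. Along a subsequence, the terms with $a(n) > 0$ outnumber those with $a(n) < 0$ by a fixed positive proportion of all non-zero terms. Since each non-zero normalized term $\widetilde a(n)$ satisfies $|\widetilde a(n)| \gg 1$ on the squarefree $\mathcal{P}$-supported integers (again using the quantitative lower bound from \eqref{eq-lb} together with Deligne/Weissauer upper bounds to keep $|\lambda_F(p)|$ bounded), the signed sum $\sum_{n\le x}\widetilde a(n)$ would be $\gg (\alpha - 1/2)\,x$ along that subsequence, contradicting the $O(x^{31/32+\epsilon})$ bound from Theorem \ref{prop1}. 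The symmetric argument rules out $S^-$ dominating. Thus $S^+(x) \sim S^-(x)$, which is the assertion that half the non-zero coefficients are positive and half negative.

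The main obstacle I anticipate is the second step's requirement of a \emph{uniform lower bound} $|\widetilde a(n)| \gg 1$ on a positive-density set of $n$ — a pointwise upper bound on the signed sum alone does not separate signs unless the individual non-zero terms are bounded away from $0$. Condition \eqref{eq-lb} is tailored precisely to supply this at primes (with the constant $c$ and the constraint $c < 4$ ensuring compatibility with the Ramanujan bound $|\lambda_G(p)| \le 4$ for $\mathrm{GSp}_4$), but propagating it to squarefree $n$ requires care: one must choose $n$ supported only on the good primes and control the product $\prod_{p \mid n} |\lambda_F(p)\lambda_G(p)|$ from below, which forces the sieve to work with primes in $\mathcal{P}$ and simultaneously demands an \emph{upper} bound on $|\lambda_F(p)|$ so the product does not blow up or shrink uncontrollably. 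Balancing the density threshold $16/17$ in \eqref{eq-lb} against the $31/32$ exponent in Theorem \ref{prop1} — so that the "excess positive mass" genuinely beats the error term — is the delicate quantitative point, and I expect the proof to hinge on making these two numerical constraints interlock.
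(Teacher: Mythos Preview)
Your second step has a genuine gap that cannot be repaired along the lines you sketch. Knowing that $\sum_{n\le x}\widetilde a(n)=o(x)$ together with $S^+(x)+S^-(x)\gg x$ does \emph{not} force $S^+(x)\sim S^-(x)$: an imbalance in the \emph{counts} need not produce an imbalance in the \emph{sum}, because the sizes $|\widetilde a(n)|$ are not under control. Even if you restrict to squarefree $n$ supported on the good primes $\mathcal P$ and secure $|\lambda_G(p)|>c$ there, the product $\prod_{p\mid n}|\lambda_F(p)\lambda_G(p)|$ has no uniform lower bound (the factors can be arbitrarily small, and their product over many primes can be tiny), and no useful upper bound either (it can be as large as $16^{\omega(n)}$). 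At best a two-sided bound $c_1\le |\widetilde a(n)|\le c_2$ would yield $c_1/c_2\le \liminf S^+/S^- \le \limsup S^+/S^- \le c_2/c_1$, which is far from the asymptotic equality claimed. The numerical interplay you anticipate between $16/17$ and $31/32$ is a red herring: no choice of constants rescues this counting-versus-sum mechanism.

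The paper takes a completely different route. It first establishes \thmref{th4} (positive lower density of primes with $\lambda_F(p)\lambda_G(p)<0$, and likewise $>0$); this is where hypothesis \eqref{eq-lb} is actually used, via \lemref{lower-density}, to show $\sum_{p\le x}\lambda_F^2(p)\lambda_G^2(p)\gg x/\log x$, which combined with the prime-level estimate $\sum_{p\le x}\lambda_F(p)\lambda_G(p)=o(x/\log x)$ from the Rankin--Selberg theory gives the sign result at primes. Then \thmref{th3} follows in two lines by feeding the multiplicative function $n\mapsto\lambda_F(n)\lambda_G(n)$ into the Matom\"aki--Radziwi\l\l\ lemma (\lemref{MR}): the Kowalski--Saha bound (\thmref{KS-th}) supplies the sparse-zeros hypothesis $\sum_{p\ge x,\,g(p)=0}1/p\to 0$, and \thmref{th4} supplies the divergence $\sum_{p\le x,\,g(p)<0}1/p\to\infty$. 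The equidistribution $S^+(x)\sim S^-(x)$ then comes out of the multiplicative structure exploited inside \lemref{MR}, not from any sum-versus-count comparison. \thmref{prop1} plays no role in the proof of \thmref{th3}.
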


We note that the subset  of primes $\{ p  ~|~ \mu_G(p) =0 \}$ 
has density zero (see appendix of \cite{RSW}). Further
the Generalized Ramanujan-Petersson conjecture
proved by Weissauer (\cite{RW}) gives that for any prime
$p$, $|\mu_G(p)| \le 4p^{k_2-\frac{3}{2}}$.  Thus
the hypothesis in \eqref{eq-lb} is not an unreasonable 
one (especially if one also believes
an analogous Sato-Tate conjecture in this setup).  
Now if we restrict to $p$-eigenvalues, then we can 
prove the following theorem;

\begin{thm}\label{th4}
Let $F \in S_{k_1}(\Gamma_2)$ and $G \in S_{k_2}(\Gamma_2)$
be as in \thmref{th3}. Then there exists a set of primes $p$ of positive 
lower density such that $\mu_F(p)\mu_G(p) \gtrless 0$.
\end{thm}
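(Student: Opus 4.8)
The plan is to leverage the hypothesis \eqref{eq-lb} together with the sign-change result of Theorem \ref{th3} and a standard partial-summation / Chebyshev-type counting argument, but restricted to prime values so that we stay at the level of the degree-one Euler factor rather than passing to products over $p^n$. First I would record the relevant normalization: writing $\lambda_F(p) := \mu_F(p) p^{-(k_1 - 3/2)}$ and $\lambda_G(p) := \mu_G(p) p^{-(k_2 - 3/2)}$, the Ramanujan--Petersson bound of Weissauer gives $|\lambda_F(p)|, |\lambda_G(p)| \le 4$, while the hypothesis says $|\lambda_G(p)| > c$ on a set of primes of lower density at least $16/17$. The sign of $\mu_F(p)\mu_G(p)$ is the sign of $\lambda_F(p)\lambda_G(p)$, so it suffices to produce positive-lower-density sets of primes on which $\lambda_F(p)\lambda_G(p) > 0$ and on which $\lambda_F(p)\lambda_G(p) < 0$.

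The main step is a second-moment (or Rankin--Selberg) estimate for $\sum_{p \le x} \lambda_F(p)\lambda_G(p)$ of the same flavour as Theorem \ref{prop1}, but run over primes only. Since $F$ and $G$ lie in distinct eigenspaces, the Rankin--Selberg $L$-function $L(s, F \times G)$ is entire (the spin or standard Rankin--Selberg convolution has no pole because the two spinor $L$-functions are not proportional — this is where the "different eigenspaces" hypothesis and the exclusion of Saito--Kurokawa lifts are used), so by a contour shift / zero-density input one gets $\sum_{p \le x} \lambda_F(p)\lambda_G(p) = o\!\left(\frac{x}{\log x}\right)$. In particular the set $P^+ := \{ p \le x : \lambda_F(p)\lambda_G(p) > 0\}$ and $P^- := \{p \le x : \lambda_F(p)\lambda_G(p) < 0\}$ each cannot contain a density-one subset of primes on which $|\lambda_F(p)\lambda_G(p)|$ is bounded below; combined with the lower bound on $\#\{p \le x : |\lambda_G(p)| > c\}$ and the fact (appendix of \cite{RSW}) that $\{p : \mu_F(p) = 0\}$ has density zero, one deduces that $\liminf_{x \to \infty} \frac{1}{x/\log x}\#\{p \le x : \lambda_F(p)\lambda_G(p) > 0\} > 0$ and likewise with the opposite sign. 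Indeed, if $P^+$ had density zero among primes, then on the density-$\ge 16/17$ set where $|\lambda_G(p)| > c$ and the density-one set where $\lambda_F(p) \ne 0$ we would have $\lambda_F(p)\lambda_G(p) < 0$ on a set of lower density $\ge 16/17 - o(1)$, contradicting $\sum_{p \le x} \lambda_F(p)\lambda_G(p) = o(x/\log x)$ only if the positive contribution is too small — the quantitative bookkeeping here is exactly the $16/17$ threshold appearing in the hypothesis, chosen so that $\frac{16}{17} \cdot c$ exceeds the admissible error; the same argument with roles of sign reversed handles $P^-$.

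I would carry this out in the order: (i) fix notation and invoke the Ramanujan--Petersson bound to reduce to normalized eigenvalues; (ii) establish the prime-restricted cancellation $\sum_{p \le x} \lambda_F(p)\lambda_G(p) = o(x/\log x)$ via holomorphy of $L(s, F\times G)$ and a Perron-type argument (or simply cite the ingredients already used for Theorem \ref{prop1} adapted to primes); (iii) run the two Chebyshev-style counting arguments against the hypothesis \eqref{eq-lb}, once for each sign, to extract the two positive-lower-density prime sets. The hard part will be step (ii): unlike the full-sum estimate of Theorem \ref{prop1}, restricting to primes loses the multiplicativity that makes the Rankin--Selberg machinery clean, so one must either pass through the logarithmic derivative of $L(s, F\times G)$ and control the sum $\sum_{p} \lambda_F(p)\lambda_G(p) p^{-s} \log p$ near $s = 1$ using a zero-free region (so that the prime sum is genuinely $o(x/\log x)$ rather than merely $O(x/\log x)$), or invoke an effective prime number theorem for the Rankin--Selberg $L$-function; either route requires knowing that $L(s,F\times G)$ is nonzero on the line $\Re(s) = 1$, which in this degree-four-times-degree-four setting follows from the analytic properties established by Weissauer and others but should be cited carefully.
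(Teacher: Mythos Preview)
Your outline is close to the paper's argument in spirit---normalize, establish $\sum_{p\le x}\lambda_F(p)\lambda_G(p)=o(x/\log x)$, then run a Chebyshev-type count---but step (iii) as you describe it has a genuine gap. Knowing only that $\sum_{p\le x}\lambda_F(p)\lambda_G(p)=o(x/\log x)$ and that $|\lambda_G(p)|>c$ on a prime set of lower density $\ge 16/17$ does \emph{not} contradict ``$P^+$ has density zero'': you have no lower bound whatsoever on $|\lambda_F(p)|$, so even if $\lambda_F(p)\lambda_G(p)<0$ on a set of density $16/17$, each term may be arbitrarily small in absolute value and the sum can still be $o(x/\log x)$. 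The Kowalski--Saha result only says $\lambda_F(p)\ne 0$ for almost all $p$, not that it is bounded away from zero. The ``$16/17\cdot c$ exceeds the admissible error'' bookkeeping you gesture at cannot be made to work with the first moment alone.

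What the paper actually uses is a \emph{second-moment} lower bound you have not mentioned:
\[
\sum_{p\le x}\lambda_F^2(p)\lambda_G^2(p)\ \gg\ \frac{x}{\log x}.
\]
This is \lemref{lower-density}. Its proof combines the hypothesis \eqref{eq-lb} with the prime number theorem $\sum_{p\le x}\lambda_F^2(p)\sim x/\log x$, which comes from \cite[Theorem~3]{WY} applied to $L(F\times F,s)$ after the $\mathrm{GL}_4$ transfer of \cite[Theorem~5.1.2]{PSS}. The threshold $16/17$ is chosen exactly so that $\sum_{p\le x,\ |\lambda_G(p)|\le c}\lambda_F^2(p)\le 16\cdot\tfrac{1}{17}\,\pi(x)<\pi(x)$ via Weissauer's bound, leaving $\sum_{|\lambda_G(p)|>c}\lambda_F^2(p)\gg \pi(x)$. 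With this in hand the paper sets
\[
S^{\pm}(x)=\sum_{p\le x}\bigl[\lambda_F(p)\lambda_G(p)\pm 16\bigr]\lambda_F(p)\lambda_G(p),
\]
bounds $S^{\pm}(x)\gg x/\log x$ from below using the second moment and $\sum_{p\le x}\lambda_F(p)\lambda_G(p)=o(x/\log x)$, and bounds $S^{+}(x)\le 512\cdot\#\{p\le x:\lambda_F(p)\lambda_G(p)>0\}$ (and similarly for $S^-$) from above via Weissauer. That is the Chebyshev step, but it only closes because of the second-moment input.

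On your step (ii): nothing needs to be proved by hand. The paper simply cites \cite[Theorem~3]{WY}, a prime number theorem for Rankin--Selberg $L$-functions of cuspidal automorphic representations on $\mathrm{GL}_n$, together with the transfer \cite[Theorem~5.1.2]{PSS} to land in $\mathrm{GL}_4(\mathbb A)$; your concern about nonvanishing on $\Re(s)=1$ is precisely what is packaged into that citation.
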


The article is distributed as follows. In the next section, we introduce
notations and preliminaries. In the last few sections, we give
proofs of \thmref{th2}, \thmref{prop1}, \thmref{th3} and \thmref{th4}.

We note that proof of \thmref{th2} requires intricate understanding
of Hecke relations whereas the proof of \thmref{prop1} uses a result
of the first author with R. Murty \cite{GM} and a beautiful work
of Pitale, Saha and Schmidt \cite{PSS} along with some elementary
analytic tools.  Moreover, \thmref{prop1} can be thought of
a generalization of a work of Das, the second author and
Sengupta \cite{DKS}. Proof of \thmref{th4} requires some standard analytic 
techniques and proof of \thmref{th3} is rather 
straightforward from the works of Matom\"aki and Radziwi\l\l ~\cite{MR}
and that we keep it here for the sake of completeness.

\smallskip

\section{Notations and Preliminaries}

\smallskip

Throughout the paper, $\R, \R_+, \Z, \N$ and $\mathcal P$ 
denote the set of real numbers, the set of positive 
real numbers, the set of integers, the set of natural numbers 
and the set of prime numbers respectively. Also we shall use the
symbol $p$ to denote a prime number.

For $f, g : \R \to \C$ with $g(x) > 0$
for all $x \in \R$, we say $f = o(g)$ if $|f(x)|/g(x) \to 0$
as $x \to +\infty$.


We say a subset $A$ of $\mathcal P$ has natural 
density $\alpha \in \R$ if 
$$
\lim_{ x \to \infty} 
\frac{ \#\{ p \in A ~|~ p \le x \}}{\#\{ p \in \mathcal P ~|~ p \le x \}}
$$
exists and is equal to $\alpha$. We shall denote the natural density of 
$A \subset \mathcal P$ by $d(A)$ if it exists. 

We say the density of $A \subset \N$ is $d(A)$ if 
$$
\lim_{x\to \infty} 
\frac{ \#\{n \le x ~|~ n \in A \}}{ \#\{n \le x ~|~ n \in \N \}}
$$
exists and is equal to the real number $d(A)$. 

Throughout the paper, we shall use definitions and basic facts about 
Siegel modular forms. We refer to Andrianov \cite{AA} for further 
details. For any integer $n \in \N$, the Hecke operator $T(n)$ on the space 
$S_k(\Gamma_2)$ is defined by 
$$
T(n)F 
:= 
n^{2k - 3}
\sum_{\gamma \in \Gamma_2 \backslash \mathcal O_{2,n}} 
F ~|~ \gamma,
$$
where 
$$
\mathcal O_{2,n} 
:= 
\{ \gamma \in M_4(\Z) ~|~ \gamma^t J \gamma = nJ \}, 
\phantom{m}
J := 
\left( 
\begin{matrix}
0 & 1_2 \\ 
-1_2 & 0
\end{matrix}
\right).
$$
It is known that the space $S_k(\Gamma_2)$ has a basis of Hecke eigenforms. 
Let $F \in S_k(\Gamma_2)$ be such that $T(n)F = \mu_F(n) F$ for all $n \in \N$. 
Then one knows that $\mu_F$ is a multiplicative function. 
If $F \in S_k(\Gamma_2)$ is not a Saito-Kurokawa lift, by a famous work of 
Weissauer \cite{RW}, one also knows that the generalized Ramanujan-Petersson 
conjecture is true, i.e. for any $\epsilon >0$, one has
$$
\mu_F(n) \ll_{\epsilon} n^{k - 3/2 + \epsilon}.
$$
We shall normalize these eigenvalues and define 
for any $n \in \N$
$$
\lambda_F(n) : = \frac{\mu_F(n)}{n^{k - 3/2}}.
$$
To each Hecke eigenform $F \in S_k(\Gamma_2)$, 
Andrianov \cite{AA} associated 
a $L$-function which is now known as spinor zeta function as follows: 
\begin{equation}
Z_F(s)
:= 
\zeta(2s+1) \sum_{n = 1}^\infty \frac{\mu_F(n)}{n^{s+k-3/2}}.
\end{equation}
The series $Z_F(s)$ is absolutely convergent  and has 
an Euler product in the region $\Re(s)>1$. 
In fact, by the works of Andrianov \cite{AA} and Oda \cite{TO}, 
one knows that if $F$ is not a 
Saito-Kurokawa lift, then the function $Z_F(s)$ is entire 
and that for $n \ge 3$
\begin{equation}
\lambda_F(p^n) 
= 
\lambda_F(p)\lambda_F(p^{n-1})
- \left[ \lambda^2_F(p) - \lambda_F(p^2) - \frac{1}{p} \right] \lambda_F(p^{n-2}) 
+ \lambda_F(p)\lambda_F(p^{n-3}) - \lambda_F(p^{n-4})
\end{equation}
with the assumption that $\lambda_F(p^{n-m}) = 0$ for 
$n < m$. As in the elliptic case, 
by a work of Kowalski and Saha \cite[Appendix]{RSW}, we 
have the following theorem. 

\begin{thm}\rm[Kowalski and Saha]\label{KS-th}
Let $F \in S_{k}(\Gamma_2)$ be a Hecke eigenform with eigenvalues 
$\mu_F(n) $ for $n \in \N$. Also assume that $F$ lies in the orthogonal 
complement of Maass subspace. Then there exists $\delta > 0$ such that 
$$
\# \{ p \le x ~|~ \mu_F(p) = 0 \} 
\ll \frac{x}{ ( \log x)^{1+\delta}}.
$$
\end{thm}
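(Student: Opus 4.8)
The plan is to obtain the estimate from the theory of $\ell$-adic Galois representations attached to $F$, in the spirit of Serre's classical treatment of the elliptic case. Since $F$ lies in the orthogonal complement of the Maass subspace and has level one --- where the only CAP forms are the Saito--Kurokawa lifts, and there are no Yoshida lifts --- $F$ is of general type in Arthur's classification. I would therefore invoke the work of Weissauer and others (see \cite{RW} and the references therein) to attach, for each prime $\ell$, a continuous irreducible representation
\[
\rho_{F,\ell} \colon \operatorname{Gal}(\overline{\Q}/\Q) \longrightarrow \mathrm{GSp}_4(\overline{\Q}_\ell),
\]
unramified outside $\ell$, with $\operatorname{tr} \rho_{F,\ell}(\operatorname{Frob}_p) = \mu_F(p)$ for every prime $p \ne \ell$ (in the arithmetic normalization) and with similitude character a power of the cyclotomic character. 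Under this correspondence $\mu_F(p) = 0$ holds precisely when $\operatorname{Frob}_p$ falls into the trace-zero locus $X := \{\, g \in \mathrm{GSp}_4 : \operatorname{tr}(g) = 0 \,\}$, a conjugation-invariant hypersurface, and a fortiori into $X$ modulo $\ell$ for every $\ell$.

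The key input I would use is that the residual image is as large as the similitude constraint allows: for $F$ of general type and for all $\ell$ outside a finite exceptional set, $\overline{\rho}_{F,\ell}(\operatorname{Gal}(\overline{\Q}/\Q))$ contains $\mathrm{Sp}_4(\mathbb{F}_\ell)$. Granting this, $X$ has codimension one, so only an $O(1/\ell)$ fraction of the elements of the residual image have trace $\equiv 0 \pmod{\ell}$, uniformly in $\ell$; and the number field $K_\ell$ cut out by $\overline{\rho}_{F,\ell}$ is ramified only at $\ell$, has degree $\ll \ell^{c}$ for an absolute constant $c$, and has only an $O(1/\ell)$ fraction of trace-zero Frobenius classes. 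Now $\mu_F(p) = 0$ forces $\operatorname{tr} \overline{\rho}_{F,\ell}(\operatorname{Frob}_p) \equiv 0 \pmod{\ell}$ for every admissible $\ell$, so I would apply the effective unconditional Chebotarev density theorem to $K_\ell$: the main term contributes $\asymp x/(\ell \log x)$, while the error term stays under control since $K_\ell / \Q$ has bounded degree and is ramified only at $\ell$. Choosing $\ell$ to be a suitable small power of $\log x$ balances these contributions and yields
\[
\# \{\, p \le x : \mu_F(p) = 0 \,\} \ \ll\ \frac{x}{(\log x)^{1 + \delta}}
\]
for some absolute $\delta > 0$, as required.

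The hard part will be the large-image statement for $\overline{\rho}_{F,\ell}$ --- that is, excluding any endoscopic or complex-multiplication accident that would make $X$ meet the Galois image in more than a negligible set --- together with assembling the effective Chebotarev bound uniformly over the family $\{K_\ell\}$; the surrounding steps are then routine. An alternative, purely analytic route would be to transfer $F$ to $\mathrm{GL}_4$ and use the known automorphy of its exterior-square (and low symmetric-power) lifts to build, from the associated Rankin--Selberg $L$-functions, a sieve controlling $\# \{\, p \le x : \lambda_F(p) = 0 \,\}$; this also produces a power-of-logarithm saving and could be used to enlarge $\delta$, but it rests on functoriality statements that are only partly available, so the Galois-theoretic argument is the more robust one.
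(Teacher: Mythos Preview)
The paper does not supply a proof of this statement: it is quoted as a known theorem of Kowalski and Saha, with a bare citation to the appendix of \cite{RSW}, and is then used as a black box in the proof of \thmref{th3}. So there is no ``paper's own proof'' to compare your attempt against.

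That said, your sketch is in outline the argument Kowalski and Saha actually give (and which the paper alludes to with the phrase ``as in the elliptic case''): attach the $\ell$-adic $\mathrm{GSp}_4$-valued Galois representation to $F$, use that the mod-$\ell$ image is large for almost all $\ell$ so that the trace-zero locus has density $O(1/\ell)$ in the image, apply an effective unconditional Chebotarev theorem to the splitting field $K_\ell$, and optimize by taking $\ell$ a small power of $\log x$. One slip worth fixing: you write that ``$K_\ell/\Q$ has bounded degree'', but two lines earlier you (correctly) said the degree is $\ll \ell^{c}$. What makes the Chebotarev error term manageable is not bounded degree but that both $[K_\ell:\Q]$ and $\log |\mathrm{disc}(K_\ell)|$ are polynomial in $\ell$, so choosing $\ell \asymp (\log x)^{\eta}$ for sufficiently small $\eta>0$ keeps the error below the main term. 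The genuinely delicate input, as you correctly flag, is the large-image statement for $\overline{\rho}_{F,\ell}$; this is where the hypothesis that $F$ lies in the orthogonal complement of the Maass space (hence is of general type at full level) is used to exclude endoscopic degenerations.
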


Let $F \in S_{k_1}(\Gamma_2)$ and $G \in S_{k_2}(\Gamma_2)$ be 
Hecke eigenforms lying in the orthogonal complement of Maass subspace. 
Also let $\{ \lambda_F(n) \}_{n \in \N}$ and $\{ \lambda_G(n) \}_{n \in \N}$ 
be the sets of normalized Hecke eigenvalues of $F$ and $G$ 
respectively. Further 
assume that $Z_F(s)$ and $Z_G(s)$ are the spinor zeta functions 
associated to $F$ and $G$ respectively. We then have 
\begin{eqnarray}\label{Spinor}
Z_F(s) 
:= \zeta(2s+1)\sum_{n = 1}^\infty \frac{\lambda_F(n)}{n^s}
:= \prod_{p \in \mathcal P}\prod_{ i =1}^4 \left(1 - \alpha_{p,i}p^{-s}\right)^{-1}\\
\phantom{m}\text{ and }\phantom{m} 
Z_G(s) 
:= \zeta(2s+1)\sum_{n = 1}^\infty \frac{\lambda_G(n)}{n^s } 
:= \prod_{p \in \mathcal P}\prod_{ i =1}^4 \left(1 - \beta_{p,i}p^{-s}\right)^{-1}. \nonumber
\end{eqnarray}
By the work of Weissauer \cite{RW}, we know that 
$|\alpha_{p, i}| = 1 = |\beta_{p,j}|$  for all $1 \le i, j \le 4$. 
Now define the Rankin-Selberg $L$-function $L(F\times G, s)$ 
as follows:
\begin{equation}\label{Ran-Selberg}
L(F \times G, s) 
:= 
\prod_{p \in \mathcal P} \prod_{ 1 \le i, j\le 4} 
\left(1 - \alpha_{p,i}\beta_{p,j}p^{-s}\right)^{-1}.
\end{equation}
This Euler product is absolutely convergent for $\Re(s) > 1$. In fact, 
Pitale, Saha and Schmidt \cite[Theorem C, p. 14]{PSS} 
proved the following theorem
for Hecke eigenforms 
which do not belong to the Maass subspace. 

\begin{thm}\rm [Pitale, Saha and Schmidt]\label{PSS}
Let $F \in S_{k_1}(\Gamma_2), G \in S_{k_2}(\Gamma_2), Z_F(s)$ and $Z_G(s)$ be 
as above. Define the $L$-function $L(F\times G, s)$ as in \eqref{Ran-Selberg}.
Then the infinite product in \eqref{Ran-Selberg} is absolutely 
convergent for $\Re(s)>1$ and the function $L(F \times G, s)$
has meromorphic continuation to $\C$ and is non-vanishing 
on the line $\Re(s) = 1$. Moreover, the function $L(F \times G, s)$ 
is entire except in the case when $k_1 = k_2$ and $\mu_F(n) = \mu_G(n)$ 
for all $n \in \N$. In later case, the function $L(F\times G, s)$ 
has a simple pole at $s = 1$.
\end{thm}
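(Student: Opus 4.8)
The plan is to reduce the analytic study of the degree-$16$ Euler product $L(F\times G,s)$ to the classical Rankin--Selberg theory on $\mathrm{GL}_4\times\mathrm{GL}_4$ by first transferring $F$ and $G$ to automorphic representations of $\mathrm{GL}_4$. Since $F$ and $G$ lie in the orthogonal complement of the Maass subspace they are not Saito--Kurokawa lifts, and at full level $\Gamma_2$ the holomorphic eigenforms outside the Saito--Kurokawa space are of general (Arthur) type, there being no Yoshida or other CAP forms at level one. Hence the cuspidal automorphic representation $\pi_F$ of $\mathrm{GSp}_4(\mathbb{A})$ attached to $F$ transfers, via the spin embedding of the dual group and the functoriality of Asgari--Shahidi, to a \emph{cuspidal} self-dual automorphic representation $\Pi_F$ of $\mathrm{GL}_4(\mathbb{A})$ whose standard $L$-function is the spinor zeta function $Z_F(s)$; the self-duality $\widetilde\Pi_F\cong\Pi_F$ reflects the symplectic character of the spin representation and the triviality of the central character at full level. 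The Satake parameters $\{\alpha_{p,i}\}$ of \eqref{Spinor} are then precisely the local parameters of $\Pi_{F,p}$, and similarly $\{\beta_{p,j}\}$ for $\Pi_G$, so that the product \eqref{Ran-Selberg} is exactly the finite part of the $\mathrm{GL}_4\times\mathrm{GL}_4$ Rankin--Selberg $L$-function $L(\Pi_F\times\Pi_G,s)$.

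With this identification in hand, all the stated analytic properties follow from the work of Jacquet, Piatetski-Shapiro and Shalika. Absolute convergence for $\Re(s)>1$ is immediate from Weissauer's bound $|\alpha_{p,i}|=|\beta_{p,j}|=1$. Meromorphic continuation to $\mathbb{C}$ and the functional equation are the standard consequences of the Rankin--Selberg integral representation, and the pole structure is dictated by the rule that the completed $L$-function acquires a simple pole at $s=1$ precisely when $\Pi_G\cong\widetilde\Pi_F$. Invoking self-duality this reads $\Pi_G\cong\Pi_F$; by strong multiplicity one for $\mathrm{GL}_4$ this holds iff the local parameters agree at almost all primes, and since the archimedean component fixes the weight, it is equivalent to $k_1=k_2$ together with $\mu_F(n)=\mu_G(n)$ for all $n$. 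In every other case the completed function is entire, and as the archimedean factors are zero-free this entirety descends to $L(F\times G,s)$. Finally, the non-vanishing on the line $\Re(s)=1$ I would obtain from the Jacquet--Shalika / Shahidi non-vanishing theorem for Rankin--Selberg $L$-functions of unitary cuspidal representations --- the higher-rank analogue of $\zeta(1+it)\neq0$ --- whose mechanism is that $L(\Pi_F\times\Pi_G,s)$ occurs in the constant term of an Eisenstein series that is holomorphic and non-zero on that line.

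The substantive obstacle is the transfer step. It is exactly the passage to a \emph{cuspidal} representation of $\mathrm{GL}_4$ that converts the a priori merely formal degree-$16$ product into a genuine $\mathrm{GL}\times\mathrm{GL}$ Rankin--Selberg $L$-function to which the classical machinery applies; establishing it requires the full force of functoriality from $\mathrm{GSp}_4$ to $\mathrm{GL}_4$ together with the verification that general-type level-one eigenforms land in cuspidal rather than isobaric representations. Once the transfer is secured, the analytic inputs are standard, with the edge non-vanishing on $\Re(s)=1$ being the most delicate among them.
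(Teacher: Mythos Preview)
The paper does not prove this theorem; it is quoted from Pitale--Saha--Schmidt \cite[Theorem~C]{PSS} as an input. Your sketch is essentially the strategy of that work: one transfers the (non-CAP, level one) eigenforms $F,G$ to cuspidal self-dual automorphic representations $\Pi_F,\Pi_G$ of $\mathrm{GL}_4(\mathbb{A})$ so that the Euler product \eqref{Ran-Selberg} becomes the finite part of the $\mathrm{GL}_4\times\mathrm{GL}_4$ Rankin--Selberg $L$-function, and then reads off meromorphic continuation, the pole criterion $\Pi_G\cong\widetilde\Pi_F\cong\Pi_F$, and non-vanishing on $\Re(s)=1$ from Jacquet--Piatetski-Shapiro--Shalika and Jacquet--Shalika/Shahidi. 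Two small corrections of attribution: the transfer you need is precisely the main theorem of \cite{PSS} (their Theorem~A/5.1.2), established via converse theorems and Bessel models, rather than a direct citation of Asgari--Shahidi (whose generic transfer does not by itself cover holomorphic Siegel forms, which are non-generic); and the exclusion of CAP/endoscopic types at full level is part of what \cite{PSS} verify to ensure the image on $\mathrm{GL}_4$ is genuinely cuspidal. With those adjustments your outline matches the cited source.
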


For Hecke eigenforms $F \in S_{k_1}(\Gamma_2)$ and 
$G \in S_{k_2}(\Gamma_2)$ as above with normalized
eigenvalues $\{ \lambda_F(n) \}_{n \in \N}$ 
and $\{ \lambda_G(n) \}_{n \in \N}$ respectively, define 
\begin{equation}\label{nRS}
L(F, G; s) 
:=
\sum_{n = 1}^\infty 
\frac{ \lambda_F(n)\lambda_G(n)}{n^{s}}.
\end{equation}
Note that this series $L(F, G; s)$ is absolutely convergent for $\Re(s) > 1$. In fact, 
the second author along with Das and Sengupta \cite{DKS} proved that the function 
$L(F, F; s)$ has meromorphic continuation to $\Re(s) > 1/2$ with only a simple pole 
at $s =1$. Also they proved the following theorem. 
\begin{thm}\rm [Das, Kohnen and Sengupta]\label{DKS}
Let $F \in S_k(\Gamma_2)$ be a Hecke eigenform which does not lie in the 
Maass subspace with normalized Hecke eigenvalues $\{ \lambda_F(n)\}_{n\in \N}$.
Then for sufficiently large $x$ and any $\epsilon > 0$, we have 
$$
\sum_{n \le x}\lambda_F^2(n) 
= 
c_F x 
+ 
O\left(k^{5/16}x^{31/32 + \epsilon}\right),
$$
where $c_F > 0$ is the residue of the $L$-function $L(F, F; s)$ at $s = 1$. 
\end{thm}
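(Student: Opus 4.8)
The plan is to read off the asymptotics from the analytic behaviour of the Dirichlet series $L(F,F;s)=\sum_{n\ge 1}\lambda_F^2(n)n^{-s}$ by a Perron-type contour argument --- essentially the mechanism underlying \thmref{prop1}, but now in the diagonal case, where a genuine main term is present. The first step is to assemble the analytic input. Comparing Euler factors $p$ by $p$ shows that $L(F,F;s)$ differs from the degree-$16$ Rankin--Selberg $L$-function $L(F\times F,s)$ of \eqref{Ran-Selberg} only by a correction Euler product $C(s)$ which, together with its reciprocal, is holomorphic and bounded on every half-plane $\Re s\ge\frac12+\delta$ and is finite and nonzero at $s=1$. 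By \thmref{PSS} with $G=F$, the function $L(F\times F,s)$ is entire except for a simple pole at $s=1$; multiplying by $C(s)$ recovers the fact (also in \cite{DKS}) that $L(F,F;s)$ continues to $\Re s>\frac12$ with a single simple pole at $s=1$, whose residue is $c_F$, positive since $L(F,F;\sigma)\to+\infty$ as $\sigma\to1^{+}$. As an automorphic Rankin--Selberg $L$-function, $L(F\times F,s)$ moreover satisfies a functional equation relating $s$ to $1-s$, with archimedean factors whose parameters are $O(k)$; this is what makes the convexity estimate below effective in the weight aspect.

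The second step is the truncated Perron formula: for $c=1+1/\log x$ and a parameter $1\le T\le x$,
$$
\sum_{n\le x}\lambda_F^2(n)=\frac{1}{2\pi i}\int_{c-iT}^{c+iT}L(F,F;s)\,\frac{x^{s}}{s}\,ds+O\!\left(\frac{x^{1+\epsilon}}{T}\right),
$$
the error term needing only the crude bound $\sum_{n\le y}\lambda_F^2(n)\ll y^{1+\epsilon}$, which follows from absolute convergence for $\Re s>1$ together with the simple pole at $s=1$ by a Tauberian argument. One then shifts the line of integration to $\Re s=\frac12+\delta$, crossing the simple pole at $s=1$ and picking up the residue $c_Fx$. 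To estimate the resulting vertical integral and the two horizontal segments at height $\pm T$, one needs a bound of the shape $|L(F,F;\sigma+it)|\ll (k(1+|t|))^{\beta(\sigma)+\epsilon}$ uniformly for $\frac12+\delta\le\sigma\le c$; this is produced by applying the Phragm\'{e}n--Lindel\"{o}f convexity principle to $L(F\times F,s)$ --- whose analytic conductor at $\sigma+it$ is controlled explicitly in both $k$ and $|t|$ through the gamma factors of \thmref{PSS} --- and multiplying by the trivial bounds for $C(s)$, which contributes nothing to the weight aspect on $\Re s\ge\frac12+\delta$.

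Assembling these pieces gives
$$
\sum_{n\le x}\lambda_F^2(n)=c_Fx+O\!\left(\frac{x^{1+\epsilon}}{T}\right)+O\!\left(x^{1/2+\delta}\,k^{\,\beta}\,T^{\,\beta'+\epsilon}\right)
$$
with exponents $\beta,\beta'$ read off from the conductor, and the proof is completed by choosing $T$ to balance the two error terms and letting $\delta\to0$; the optimisation produces $x^{31/32+\epsilon}$ in the main variable and $k^{5/16}$ in the weight. In practice I would run the last two steps through the general analytic lemma of the first author and Murty \cite{GM}, which packages exactly this bookkeeping --- Perron truncation, contour shift, and parameter optimisation for Dirichlet series built from $L$-functions of controlled conductor --- into a single statement; the present diagonal case is then the same engine that proves \thmref{prop1} for $F\ne G$, the only additional feature being the residual main term $c_Fx$ coming from the pole supplied by \thmref{PSS}.

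The part I expect to be delicate is not the contour manipulation but the weight dependence: one must track the archimedean parameters of $L(F\times F,s)$ through its functional equation, through the convexity bound, and then through the optimisation of $T$, and it is exactly this accounting --- rather than any new idea --- that pins the weight exponent at $5/16$ while leaving the exponent of $x$ at $31/32$. The analogous computation carried out with possibly unequal weights, where the conductor of $L(F\times G,s)$ is governed by $\max\{k_1,k_2\}$, is what produces the slightly larger weight exponent $\max\{k_1,k_2\}^{3/8}$ appearing in \thmref{prop1}.
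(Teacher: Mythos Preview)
The paper does not prove this theorem; it is quoted as a known result from \cite{DKS}, so there is no in-paper proof to compare against. That said, your outline is precisely the method the paper uses to establish the off-diagonal analogue \thmref{prop1} (Perron truncation, shift of contour past the pole supplied by \thmref{PSS}, convexity bound from the functional equation via \propref{Rademacher}, and optimisation of $T$), and the paper explicitly describes \thmref{prop1} as a generalisation of \cite{DKS}; so your approach almost certainly matches the original argument as well.

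One correction is needed. You propose to ``run the last two steps through the general analytic lemma of the first author and Murty \cite{GM}'', describing it as a package for Perron truncation, contour shift, and parameter optimisation. That is not what \cite{GM} contains: in this paper the result quoted from \cite{GM} is \thmref{GM1}, a purely algebraic identity about formal power series, and its only role is to produce the polynomial correction factors $g_p(T)$ in \lemref{lem19} relating $L(F,G;s)$ to $L(F\times G,s)$. The Perron formula, the contour shift, the convexity estimate (\lemref{lem20}), and the choice $T=x^{1/16}$, $\delta=15/16$ are all carried out by hand in the proof of \thmref{prop1}. Your sketch is otherwise sound, but you should not attribute the analytic bookkeeping to \cite{GM}.
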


To prove \corref{th1}, we investigate analytic properties 
of $L(F, G; s)$ when $F$ and $G$ lie in different
eigenspaces. In order to do so, we need the 
following result on the formal power series 
by the first author and Ram Murty \cite[Theorem 2]{GM}.

\begin{thm}\rm[Gun and Murty] \label{GM1}
Let $P_i(T)$ and $Q_i(T)$ be non-zero polynomials over $\C$ 
such that degree of $P_i$ is strictly less than the degree of $Q_i$ 
for $i = 1, 2$. Also let 
$$
Q_1(T) 
:= 
\prod_{i = 1}^r (1 - \alpha_i T )^{\ell_i}
\tab \text{ and }\tab
Q_2(T) 
:= 
\prod_{j = 1}^t (1 - \beta_j T )^{m_j},
$$
where $\alpha_i$'s are distinct for $1\le i\le r$ and $\beta_j$'s 
are distinct for $1\le j\le t$ and $\ell_i , m_j \in \N$. Let us also 
assume that 
$$
\sum_{n \ge 0} a_n T^n 
= 
\frac{P_1(T)}{Q_1(T)}
\tab \text{ and }\tab
\sum_{n \ge 0} b_n T^n 
= 
\frac{P_2(T)}{Q_2(T)}
$$
where $a_n, b_n \in \C$ for all $n \ge 0$. Then we have 
$$
\sum_{n \ge 0} a_n b_n T^n 
= 
\frac{R(T)}{\prod_{i,j}(1 - \alpha_i \beta_j T)^{\ell_i m_j}},
$$
where $R(T) \in \C[T]$. Now if $a_0 = 1 = b_0$, then $R(0) = 1$. Further if we 
have $P'_1(0) = 0 = P'_2(0)$, then $R'(0) = 0$. Here $P'$ denotes the derivative 
of $P(T)$ with respect to $T$. 
\end{thm}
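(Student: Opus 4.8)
The plan is to reduce everything to partial fraction expansions. Since $\deg P_i<\deg Q_i$ and $Q_1(0)=Q_2(0)=1$, each of $P_1/Q_1$ and $P_2/Q_2$ is a proper rational function regular at $T=0$, so
\[
\frac{P_1(T)}{Q_1(T)}=\sum_{i=1}^{r}\sum_{s=1}^{\ell_i}\frac{c_{i,s}}{(1-\alpha_i T)^s},\qquad \frac{P_2(T)}{Q_2(T)}=\sum_{j=1}^{t}\sum_{u=1}^{m_j}\frac{d_{j,u}}{(1-\beta_j T)^u}
\]
for suitable $c_{i,s},d_{j,u}\in\C$. Expanding each block via $(1-\alpha T)^{-s}=\sum_{n\ge0}\binom{n+s-1}{s-1}\alpha^nT^n$, and using that $\binom{n+s-1}{s-1}$ is a polynomial in $n$ of degree $s-1$, I obtain
\[
a_n=\sum_{i=1}^{r}p_i(n)\,\alpha_i^{\,n},\qquad b_n=\sum_{j=1}^{t}q_j(n)\,\beta_j^{\,n},
\]
with $p_i,q_j\in\C[n]$, $\deg p_i\le\ell_i-1$ and $\deg q_j\le m_j-1$.

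Multiplying, $a_nb_n=\sum_{i,j}p_i(n)q_j(n)\,(\alpha_i\beta_j)^n$, where each summand is a polynomial in $n$, of degree at most $(\ell_i-1)+(m_j-1)=\ell_i+m_j-2$, times $\gamma^n$ with $\gamma=\alpha_i\beta_j$. Using the elementary fact that $\sum_{n\ge0}r(n)\gamma^nT^n$ is a rational function with denominator $(1-\gamma T)^{\deg r+1}$ and numerator of degree $\le\deg r$, it follows that $\sum_{n\ge0}a_nb_nT^n$ is rational and proper, with denominator dividing $\prod_{i,j}(1-\alpha_i\beta_j T)^{\ell_i+m_j-1}$. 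Since $(\ell_i-1)(m_j-1)\ge0$ gives $\ell_i+m_j-1\le\ell_i m_j$, the denominator in fact divides $\prod_{i,j}(1-\alpha_i\beta_j T)^{\ell_i m_j}$, so
\[
R(T):=\Big(\sum_{n\ge0}a_nb_nT^n\Big)\prod_{i,j}(1-\alpha_i\beta_j T)^{\ell_i m_j}\in\C[T],
\]
which is the first assertion.

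For the normalizations at $0$ I would compare low-order coefficients. Put $H(T)=\sum_{n\ge0}a_nb_nT^n$ and $D(T)=\prod_{i,j}(1-\alpha_i\beta_j T)^{\ell_i m_j}$, so $R=HD$; expanding $D$ gives $D(0)=1$ and $D'(0)=-\big(\sum_i\ell_i\alpha_i\big)\big(\sum_j m_j\beta_j\big)$. From $P_1=Q_1\sum_n a_nT^n$, $P_2=Q_2\sum_n b_nT^n$ together with $Q_1(0)=Q_2(0)=1$, $Q_1'(0)=-\sum_i\ell_i\alpha_i$, $Q_2'(0)=-\sum_j m_j\beta_j$, comparing constant and linear coefficients yields $a_0=P_1(0)$, $b_0=P_2(0)$, $a_1=P_1'(0)+\big(\sum_i\ell_i\alpha_i\big)a_0$, $b_1=P_2'(0)+\big(\sum_j m_j\beta_j\big)b_0$. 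Hence $R(0)=H(0)D(0)=a_0b_0$, equal to $1$ when $a_0=b_0=1$; and $R'(0)=H'(0)D(0)+H(0)D'(0)=a_1b_1-a_0b_0\big(\sum_i\ell_i\alpha_i\big)\big(\sum_j m_j\beta_j\big)$, which vanishes as soon as $P_1'(0)=P_2'(0)=0$, since then $a_1b_1=a_0b_0\big(\sum_i\ell_i\alpha_i\big)\big(\sum_j m_j\beta_j\big)$.

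The one point requiring care, and the step I expect to be the main obstacle, is the bookkeeping of multiplicities when several pairs $(i,j)$ give the same product $\gamma=\alpha_i\beta_j$: one must check that after grouping such terms the power of $(1-\gamma T)$ in the reduced denominator of $H(T)$ is still at most $\sum_{(i,j):\,\alpha_i\beta_j=\gamma}\ell_i m_j$, so that $R(T)$ really is a polynomial. This holds because $\sum_{(i,j):\,\alpha_i\beta_j=\gamma}p_i(n)q_j(n)$ is a polynomial in $n$ of degree at most $\max_{(i,j):\,\alpha_i\beta_j=\gamma}(\ell_i+m_j-2)$, so the associated denominator exponent is at most $\max_{(i,j):\,\alpha_i\beta_j=\gamma}\ell_i m_j\le\sum_{(i,j):\,\alpha_i\beta_j=\gamma}\ell_i m_j$. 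Everything else is a routine coefficient comparison; the same estimates also give $\deg R<\deg D$, so the resulting Hadamard product is again a proper rational function.
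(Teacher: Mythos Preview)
The paper does not give its own proof of this theorem: it is quoted verbatim from Gun and Murty \cite[Theorem 2]{GM} as a preliminary tool, so there is nothing in the present paper to compare your argument against line by line. That said, your proof is correct and is essentially the standard partial-fraction route one would expect for this kind of Hadamard-product identity. The reduction to $a_n=\sum_i p_i(n)\alpha_i^{n}$, $b_n=\sum_j q_j(n)\beta_j^{n}$ with $\deg p_i\le \ell_i-1$, $\deg q_j\le m_j-1$ is exactly right, and your use of $(\ell_i-1)(m_j-1)\ge 0$ to pass from the exponent $\ell_i+m_j-1$ to $\ell_i m_j$ is the key inequality. Your handling of coincidences $\alpha_i\beta_j=\alpha_{i'}\beta_{j'}$ by bounding the grouped polynomial degree by $\max(\ell_i+m_j-2)$ and then by $\sum \ell_i m_j$ over the collision set is clean and suffices. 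The computations of $R(0)$ and $R'(0)$ via $R=HD$ and the identities $a_1=P_1'(0)+(\sum_i\ell_i\alpha_i)a_0$, $b_1=P_2'(0)+(\sum_j m_j\beta_j)b_0$ are correct; note in passing that your argument for $R'(0)=0$ actually does not require $a_0=b_0=1$, only $P_1'(0)=P_2'(0)=0$, which is a mild strengthening of what is stated.
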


To prove \thmref{th3}, we shall make use of the following
result on the sign changes of multiplicative functions by 
Matom\"aki and Radziwi\l\l  ~\cite[Lemma 2.4]{MR}.
\begin{lem}\rm[Matom\"aki and Radziwi\l\l]\label{MR}
Let $K , L : \R_+ \to \R_+$ be functions such that $K(x) \to 0$ and 
$L(x) \to \infty$ as $x \to \infty$. Let $g : \N \to \R$ be a 
multiplicative function such that for every $x \ge 2$, we have  
$$
\sum_{ p \ge x, \atop g(p) = 0} \frac{1}{p} \le K(x)
\phantom{m}\text{ and }\phantom{m}
\sum_{ p \le x, \atop g(p) < 0} \frac{1}{p} \ge L(x).
$$
Then we have 
\begin{eqnarray*}
\#\{ n \le x ~|~ g(n) > 0\} 
& = &
(1 + o(1)) \cdot \#\{ n \le x ~|~ g(n) < 0\} \\
& = &
\left(\frac{1}{2} + o(1)\right) x 
\prod_{p \in \mathcal P} \left(1-\frac{1}{p}\right)
\left(1+ \frac{h(p)}{p} + \frac{h(p^2)}{p^2} + \cdots\right),
\end{eqnarray*}
where $h$ is the characteristic function of the set 
$\{ n \in \N ~|~ g(n) \ne 0 \}$.  
\end{lem}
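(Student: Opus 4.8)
The plan is to reduce the two-sided count to a single mean-value statement together with one cancellation estimate. Write $h$ for the indicator of the set $\{n : g(n) \neq 0\}$; since $g$ is multiplicative, $h$ is multiplicative with $h(p^a)=1$ exactly when $g(p^a)\neq 0$. Let $s := \mathrm{sgn}\circ g$, a real multiplicative function with $s(n)\in\{-1,0,1\}$, and set $H(x):=\sum_{n\le x}h(n)$ and $S(x):=\sum_{n\le x}s(n)$. Then for every $x$ one has the exact identities
$$
\#\{n\le x : g(n)>0\}=\tfrac12\big(H(x)+S(x)\big),
\qquad
\#\{n\le x : g(n)<0\}=\tfrac12\big(H(x)-S(x)\big).
$$
Thus the whole lemma follows once I prove (i) $H(x)=(C+o(1))\,x$ with $C=\prod_{p\in\mathcal P}(1-1/p)(1+h(p)/p+h(p^2)/p^2+\cdots)>0$, and (ii) $S(x)=o(x)$.

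For step (i), taking $x=2$ in the first hypothesis gives $\sum_{p\in\mathcal P,\,g(p)=0}1/p\le K(2)<\infty$, so $\sum_p(1-h(p))/p$ converges. Hence $h$ is a non-negative multiplicative function bounded by $1$ whose prime values equal $1$ outside a set of finite reciprocal sum. A standard mean-value theorem for such functions (Wirsing's theorem, or elementarily by writing $h=\mathbf 1 * a$ with $a=\mu * h$, where $\sum_d|a(d)|/d<\infty$ because $\sum_p(1-h(p))/p<\infty$) gives $H(x)\sim x\sum_{d\ge 1}a(d)/d$. A short Euler-factor computation shows $\sum_{d}a(d)/d=\prod_p(1-1/p)(1+h(p)/p+h(p^2)/p^2+\cdots)=C$, and the convergence just noted guarantees $C>0$. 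This step is routine.

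The substance is step (ii), which I would establish by Halász's mean-value theorem in its pretentious form, using $\mathbb{D}(f_1,f_2;x)^2:=\sum_{p\le x}(1-\mathrm{Re}(f_1(p)\overline{f_2(p)}))/p$. Since $s$ is real with $|s(p)|\le 1$ and $s(p)=-1$ whenever $g(p)<0$, the second hypothesis yields
$$
\mathbb{D}(s,1;x)^2=\sum_{p\le x}\frac{1-s(p)}{p}\ge 2\sum_{p\le x,\,g(p)<0}\frac1p\ge 2\,L(x)\longrightarrow\infty .
$$
Halász's theorem bounds $\tfrac1x|S(x)|$ in terms of $M(x):=\min_{|t|\le \log x}\mathbb{D}(s,n^{it};x)^2$, giving $o(1)$ as soon as $M(x)\to\infty$. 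To control the archimedean parameter I use that $s$ is real, so $\mathbb{D}(s,n^{it};x)=\mathbb{D}(s,n^{-it};x)$ and hence, by the triangle inequality, $2\,\mathbb{D}(s,n^{it};x)\ge \mathbb{D}(1,n^{2it};x)$. For $|t|\le 1/\log x$ the factor $n^{it}$ is nearly constant, $\mathbb{D}(1,n^{it};x)=O(1)$, so $\mathbb{D}(s,n^{it};x)\ge \mathbb{D}(s,1;x)-O(1)\to\infty$; for $1/\log x<|t|\le\log x$ the standard lower bound for $\sum_{p\le x}(1-\cos(2t\log p))/p$ forces $\mathbb{D}(1,n^{2it};x)\to\infty$, hence $\mathbb{D}(s,n^{it};x)\to\infty$ as well. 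Therefore $M(x)\to\infty$ uniformly in $t$, and Halász gives $S(x)=o(x)$. Feeding (i) and (ii) into the two displayed identities makes both counts equal to $(\tfrac12+o(1))\,C\,x$, which is the assertion. The main obstacle is precisely the archimedean input in (ii): ruling out that the real function $s$ pretends to be a character $n^{it}$ with $t\neq 0$. The conjugation-symmetry argument above, combined with the quantitative estimate for $\mathbb{D}(1,n^{2it};x)$ and the uniform range $|t|\le\log x$ in Halász's theorem, is exactly what closes this gap, whereas the mean-value input (i) is comparatively standard.
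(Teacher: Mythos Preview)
This lemma is not proved in the present paper; it is quoted verbatim from Matom\"aki--Radziwi\l\l\ \cite[Lemma~2.4]{MR} and used as a black box in the proof of \thmref{th3}. There is therefore no proof here to compare against.

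That said, your proposal is essentially the argument that Matom\"aki and Radziwi\l\l\ themselves give. The decomposition
\[
\#\{n\le x:g(n)\gtrless 0\}=\tfrac12\bigl(H(x)\pm S(x)\bigr),
\]
a Wirsing--type mean-value theorem for $h$ (using $\sum_p(1-h(p))/p\le K(2)<\infty$), and the vanishing of the mean of $s$ (using $\sum_p(1-s(p))/p=\infty$) is exactly their route. One simplification is available: since $s$ is \emph{real}-valued with $|s|\le 1$, Wirsing's 1967 theorem already guarantees that the mean value of $s$ exists and equals $0$ once $\sum_p(1-s(p))/p$ diverges; the full Hal\'asz machinery with the twists $n^{it}$ is not required. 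Your final paragraph is correct in spirit but heavier than necessary, and as written the cutoff at $|t|=1/\log x$ does not quite work: for $|t|=C/\log x$ with $C$ large but fixed one has $\mathbb{D}(1,n^{2it};x)^2\asymp\log C$, which is bounded, so neither of your two cases gives divergence. The fix is standard---choose the boundary $C/\log x$ with $C=C(A)$ depending on the target accuracy---but it is cleaner simply to invoke Wirsing for real multiplicative functions.
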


\smallskip

\section{Proof of \thmref{th2}}

\smallskip

In this section, we shall give a proof of \thmref{th2}. Let us 
recall that for any prime $p$ and any natural 
number $n \ge 3$, we have 
\begin{equation}\label{Hecke}
\lambda_F(p^n) 
= 
\lambda_F(p)\lambda_F(p^{n-1})
- \left[ \lambda^2_F(p) - \lambda_F(p^2) - \frac{1}{p} \right] 
\lambda_F(p^{n-2}) 
+ \lambda_F(p)\lambda_F(p^{n-3}) - \lambda_F(p^{n-4}),
\end{equation}
with the assumption that $\lambda_F(p^{n-m}) = 0$ for $n < m$ are
natural numbers. Similar relations hold among the Hecke
eigenvalues $\lambda_G(p^n)$ for $n\ge 3$. We use these
relations to derive some important consequences which will help us to
prove our result.  We start with a
general result which might be of independent interest.

\begin{lem}\label{poly}
Let $f_0(x) = - 1$ and $f_1(x) = - x$ be polynomials over $\Z$. 
Define a family of polynomials $\{ f_n \}_{n \in \N}$ by
\begin{equation}\label{recurrance}
f_{n+1}(x) = x f_n(x) - f_{n-1}(x).
\end{equation}
Then for any $\alpha \in \Q \setminus \Z$, we have $f_n(\alpha) \ne 0$ 
for all $n \in \N$.
\end{lem}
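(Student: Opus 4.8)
The plan is to analyze the family $\{f_n\}$ via its closed form coming from the Chebyshev-type recurrence \eqref{recurrance}. Writing $x = \alpha$ and introducing the auxiliary quantity $\beta$ with $\beta + \beta^{-1} = \alpha$, one checks by induction that
\begin{equation*}
f_n(\alpha) = -\,\frac{\beta^{n+1} - \beta^{-(n+1)}}{\beta - \beta^{-1}},
\end{equation*}
i.e. $f_n$ is essentially $U_n(\alpha/2)$ up to sign, where $U_n$ is the Chebyshev polynomial of the second kind; the initial data $f_0 = -1$, $f_1 = -\alpha$ match, and the recurrence is exactly the three-term recurrence for $U_n$. Thus $f_n(\alpha) = 0$ forces $\beta^{2(n+1)} = 1$, so $\beta$ is a root of unity and $\alpha = 2\cos(\pi j/(n+1))$ for some integer $j$.

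The key step is then to invoke the fact that $2\cos(\pi j/(n+1))$ is an algebraic integer (it is $\zeta + \zeta^{-1}$ for a root of unity $\zeta$), hence if it is rational it must be an ordinary integer. So the only rational values $\alpha$ at which some $f_n$ vanishes are integers, namely $\alpha \in \{0, \pm 1, \pm 2\}$ (the possible rational values of $2\cos$ of a rational multiple of $\pi$, by Niven's theorem). Since the hypothesis is $\alpha \in \Q \setminus \Z$, no such vanishing occurs, which is exactly the claim. An alternative, more elementary route avoiding Niven's theorem: if $\alpha = a/b$ in lowest terms with $b \ge 2$, an induction on $n$ using \eqref{recurrance} shows $f_n(\alpha) = (-1)^{n+1} c_n / b^n$ with $c_n \in \Z$ and $b \nmid c_n$ (track the recurrence $c_{n+1} = a c_n - b^2 c_{n-1}$ modulo $b$, noting $\gcd(a,b)=1$), so $f_n(\alpha) \ne 0$; this is the route I would actually write out, as it is self-contained and gives the sharper $p$-adic information likely needed downstream.

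The main obstacle is making the denominator-tracking induction clean: one must verify the base cases $f_0(\alpha) = -1$ and $f_1(\alpha) = -a/b$ fit the pattern, and then show that $b \nmid c_{n+1}$ given $b \nmid c_n$, using that the term $b^2 c_{n-1}$ vanishes modulo $b$ and $a$ is a unit modulo $b$. A minor subtlety is that \eqref{recurrance} indexes $f_{n+1}$ in terms of $f_n, f_{n-1}$, so the induction needs the statement to hold for two consecutive indices; since both $f_0$ and $f_1$ have denominator exactly $b^0$ and $b^1$ with numerators coprime to $b$, the two-step induction goes through without incident.
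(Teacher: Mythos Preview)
Your second route is correct in spirit, but the paper's proof is considerably shorter and you have essentially rediscovered it in unrolled form. The paper simply observes, by an easy induction on the recurrence \eqref{recurrance}, that each $f_n$ has the shape
\[
f_n(x) = -x^n + a_{n,n-1}x^{n-1} + \cdots + a_{n,0}, \qquad a_{n,i} \in \Z,
\]
so $-f_n$ is a monic polynomial in $\Z[x]$. Since $\Z$ is integrally closed in $\Q$ (equivalently, by the rational root theorem), any rational root of $f_n$ must lie in $\Z$. That is the entire argument.

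Your denominator-tracking induction is exactly the standard proof of the rational root theorem specialized to this family, so it is not a genuinely different approach, just a hands-on version of the same idea. (Incidentally, your sign bookkeeping is slightly off: with $f_n(\alpha)=c_n/b^n$ and no extra $(-1)^{n+1}$, one gets $c_0=-1$, $c_1=-a$, and $c_{n+1}=ac_n-b^2c_{n-1}$, whence $c_n\equiv -a^n\pmod b$; your stated form with the alternating sign does not match $f_2$.) The Chebyshev/Niven detour is correct but unnecessary, and in the end it too appeals to the same fact that $\zeta+\zeta^{-1}$ is an algebraic integer, i.e., integral closure of $\Z$; you may as well invoke that fact directly on $f_n$ itself, as the paper does.
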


\begin{proof}
We first show by induction on $n \in \N$ that
\begin{equation}\label{rec}
f_n(x) 
= 
- x^n + a_{n, n - 1}x^{n - 1} + a_{n, n - 2}x^{n - 2} + \cdots 
+  a_{n, 1}x + a_{n, 0},
\end{equation}
where $a_{n, i} \in \Z, ~ 0 \le i \le n - 2$. 
Note that this is true for $n = 0, 1$. 
Using \eqref{recurrance}, we have
$$
f_{n+1}(x) 
~=~
- x^{n+1} + a_{n, n-1} x^{n} + (a_{n, n-2} + 1) x^{n - 1} +
\cdots + ( a_{n, 0} - a_{n-1, 1})x - a_{n-1, 0}.
$$
Hence by induction we have \eqref{rec}.
Since $\Z$ is integrally closed, any 
solution in $\Q$ of $f_n(x) =0$ for any $n$ will be an integer.
This completes the proof of the lemma.
\end{proof}

\begin{lem}\label{even}
Let $F \in S_k(\Gamma_2)$ be a Hecke eigenform which lies in the 
orthogonal complement of the Maass subspace with normalized Hecke 
eigenvalues $\lambda_F(n)$ for $n \in \N$. 
Then 
\begin{enumerate}

\item
If $\lambda_F(p^{2m}) = 0$ for some $m \ge 2$, then
at least one of $\lambda_F(p), \lambda_F(p^2)$ is non-zero.
 
\item
There does not exist $t \in \N$ such that
$$
\lambda_F(p^m) ~=~0 
\phantom{m}\text{for}\phantom{m}
t+1\le m \le t+4.
$$
\end{enumerate}
\end{lem}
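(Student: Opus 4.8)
The plan is to exploit the four-term linear recurrence \eqref{Hecke} together with \lemref{poly}. Set $x := \lambda_F(p)$ and $y := \lambda_F(p^2)$, and note that \eqref{Hecke} expresses $\lambda_F(p^n)$ for $n \ge 3$ as a $\Z[x,y,1/p]$-linear combination of the four preceding values. The first observation is that $x$ and $y$ cannot both be zero: if $\lambda_F(p) = \lambda_F(p^2) = 0$, then an immediate induction using \eqref{Hecke} forces $\lambda_F(p^n) = 0$ for \emph{all} $n \ge 1$, hence the local Euler factor of $Z_F$ at $p$ is trivial, i.e. $\{\alpha_{p,i}\} = \{1, -1, 1, -1\}$ up to ordering (or some such degenerate configuration). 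But by Weissauer $|\alpha_{p,i}| = 1$, and one checks that the resulting local factor contradicts $|\lambda_F(p)| = |\alpha_{p,1} + \alpha_{p,2} + \alpha_{p,3} + \alpha_{p,4} - p^{-1}(\cdots)| $ being consistent with a genuine spinor $L$-factor of a non-lift; more simply, $\lambda_F(p) = \lambda_F(p^2) = 0$ makes $Z_F$ locally degenerate in a way incompatible with the functional equation / Euler product shape of a non-Saito--Kurokawa eigenform. This handles part (1) in the contrapositive, since if $x = y = 0$ then in particular $\lambda_F(p^{2m}) = 0$ would be automatic but so would $\lambda_F(p^2) = 0$ with $\lambda_F(p) = 0$, the excluded case; conversely assuming $\lambda_F(p^{2m}) = 0$ for some $m \ge 2$ we must show $(x,y) \ne (0,0)$, which is exactly the statement just proved to hold unconditionally.

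For part (2), suppose toward a contradiction that $\lambda_F(p^m) = 0$ for $t+1 \le m \le t+4$ with $t \ge 1$. Reading the recurrence \eqref{Hecke} \emph{backwards} — solving for $\lambda_F(p^{n-4})$ in terms of $\lambda_F(p^n), \ldots, \lambda_F(p^{n-3})$, which is legitimate since the coefficient of $\lambda_F(p^{n-4})$ is $-1$ — four consecutive zeros propagate downward: from $\lambda_F(p^{t+1}) = \cdots = \lambda_F(p^{t+4}) = 0$ we descend to conclude $\lambda_F(p^{t}) = 0$, then $\lambda_F(p^{t-1}) = 0$, and so on down to $\lambda_F(p) = \lambda_F(p^0) = \lambda_F(1) = 0$. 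But $\lambda_F(1) = 1$, a contradiction. This already disposes of part (2) without any rationality input, though one should double-check that the backward step is valid for all $n$ in the relevant range (the boundary conventions $\lambda_F(p^{n-m}) = 0$ for $n < m$ must be handled: for $t = 1$ one descends from $m = 2,3,4,5$ to $\lambda_F(p) = 0$ and then the $n=4$ instance of \eqref{Hecke} gives $\lambda_F(1) = \lambda_F(p^0) = 0$ using $\lambda_F(p) = \lambda_F(p^2) = 0$).

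The main obstacle I anticipate is part (1) in the regime where one genuinely needs \lemref{poly}: if $\lambda_F(p) =: \alpha \ne 0$ but $\lambda_F(p^2) = 0$, then the recurrence \eqref{Hecke} with $y = 0$ becomes a \emph{three}-term relation in disguise once one tracks the pattern, and the values $\lambda_F(p^n)$ are governed by the polynomials $f_n$ of \lemref{poly} evaluated at $\alpha$ — more precisely, after the substitution one should get $\lambda_F(p^n) = f_n(\alpha) + (\text{correction involving } 1/p)$, or better, a rescaled version $\lambda_F(p^n) = $ (polynomial in $\alpha$ with the leading behaviour of $f_n$) so that $\lambda_F(p^{2m}) = 0$ forces $f_{2m}$ or a close relative to vanish at a rational non-integer. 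The delicate point is that $\alpha = \lambda_F(p) = \mu_F(p)/p^{k-3/2}$ need \emph{not} be rational — it is an algebraic number — so \lemref{poly} as literally stated does not apply. I expect one must either (a) restrict attention to the case where the Hecke eigenvalues are rational (e.g. when $S_{k}(\Gamma_2)$ is one-dimensional) or (b) replace $\Q \setminus \Z$ by the statement "not an algebraic integer" and observe that $\lambda_F(p) \notin \Z[1/p]$-integral obstructions force $f_n(\lambda_F(p)) \ne 0$ via the same integral-closure argument over the ring of integers of the Hecke field localized away from $p$. Sorting out exactly which integrality statement is available for $\lambda_F(p^2) = 0$ — equivalently $\mu_F(p^2) = 0$, which says $p^{2k-3} \mid$ something — is the crux, and I would spend most of the effort making that precise before the rest falls out formally.
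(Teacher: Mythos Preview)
Your argument for part (2) is essentially correct and matches the paper's: the backward induction from four consecutive zeros works because the coefficient of $\lambda_F(p^{n-4})$ in \eqref{Hecke} is $-1$, and you reach a contradiction either at $\lambda_F(p^0)=1$ (your version) or by invoking part (1) once you hit $\lambda_F(p)=\lambda_F(p^2)=0$ (the paper's version).

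Part (1), however, contains a genuine error. Your claim that $\lambda_F(p)=\lambda_F(p^2)=0$ forces $\lambda_F(p^n)=0$ for all $n\ge 1$ by ``immediate induction'' is false: the recurrence \eqref{Hecke} has a $-\lambda_F(p^{n-4})$ term whose coefficient is the constant $-1$, independent of $\lambda_F(p)$ and $\lambda_F(p^2)$. With $\lambda_F(p)=\lambda_F(p^2)=0$ the recurrence collapses not to zero but to
\[
\lambda_F(p^n)=\tfrac{1}{p}\,\lambda_F(p^{n-2})-\lambda_F(p^{n-4}),
\]
so $\lambda_F(p^3)=0$ but $\lambda_F(p^4)=-1$, $\lambda_F(p^6)=-1/p$, etc. All the discussion of degenerate Euler factors is therefore beside the point.

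This is precisely where \lemref{poly} enters, and you have misidentified its role. One does \emph{not} evaluate the $f_n$ at $\lambda_F(p)$; one evaluates them at $1/p$. From the two-term recurrence above, $\lambda_F(p^{2n+4})=f_n(1/p)$ with $f_0=-1$, $f_1(x)=-x$, $f_{n+1}=xf_n-f_{n-1}$. Since $1/p\in\Q\setminus\Z$, \lemref{poly} gives $f_n(1/p)\ne 0$ for every $n$, hence $\lambda_F(p^{2m})\ne 0$ for all $m\ge 2$ --- contradicting the hypothesis of (1). There is no rationality or integrality issue for $\lambda_F(p)$ to resolve; the entire worry in your final paragraph evaporates once the correct evaluation point is used.
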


\begin{proof}
Suppose that $\lambda_F(p)= 0 = \lambda_F(p^2)$. 
Then for any $n \ge 0$,
$$
\lambda_F(p^{2n + 4}) =  f_n \left(\frac{1}{p} \right),
$$
where $f_n$'s are polynomials in $\Z[x]$ 
satisfying the hypothesis of \lemref{poly}.
Hence by \lemref{poly}, we have
$\lambda_F(p^{2m}) \ne 0$ for all $m \ge 2$, a contradiction
to our hypothesis. This completes the proof 
of the first part of the lemma.

To prove the second part of the lemma, let us assume that
$\lambda_F(p^m) =0$ for $t+1\le m \le t+4$.
Using \eqref{Hecke}, we have
$$
\lambda_F(p^t) = - \lambda_F(p^{t+4}) =0.
$$
Using induction and the identity \eqref{Hecke}, 
we get that $\lambda_F(p^m) =0$ for $1 \le m \le t +4$. 
This implies that $\lambda_F(p)= 0 = \lambda_F(p^2)$,
a contradiction to the first part of the lemma.
\end{proof}

\begin{lem}\label{odd}
Let $F \in S_k(\Gamma_2)$ be a Hecke eigenform which lies in the 
orthogonal complement of the Maass subspace with normalized Hecke 
eigenvalues $\lambda_F(n)$ for $n \in \N$. Then
\begin{enumerate}

\item
For some $m \ge 0$, $\lambda_F(p^{2m+ 1}) \ne 0$
implies that $\lambda_F(p) \ne 0$.

\item
If $\lambda_F(p) \ne 0$, then for any 
$m \in \N$, there exists $0 \le i \le 3$ such that
$\lambda_F(p^{2(m + i) +1}) \ne 0$.
\end{enumerate}
\end{lem}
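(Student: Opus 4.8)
The plan is to treat the two parts separately, with part (1) as a warm-up and part (2) carrying the real content.

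For part (1) I would prove the contrapositive: assuming $\lambda_F(p)=0$, show $\lambda_F(p^{2m+1})=0$ for every $m\ge 0$. Putting $\lambda_F(p)=0$ in \eqref{Hecke} collapses the four-term recurrence to $\lambda_F(p^n)=\big(\lambda_F(p^2)+\tfrac1p\big)\lambda_F(p^{n-2})-\lambda_F(p^{n-4})$ for $n\ge 3$, a relation that only couples exponents of the same parity. Since $\lambda_F(p^1)=0$ and, applying this relation at $n=3$ with the convention $\lambda_F(p^{-1})=0$, also $\lambda_F(p^3)=0$, a straightforward induction kills all odd powers; this is exactly the contrapositive of part (1).

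For part (2) I would argue by contradiction. Fix $m\in\N$ and suppose $\lambda_F(p)\ne 0$ while $\lambda_F(p^{2m+1})=\lambda_F(p^{2m+3})=\lambda_F(p^{2m+5})=\lambda_F(p^{2m+7})=0$. Write $A:=\lambda_F^2(p)-\lambda_F(p^2)-\tfrac1p$ and $e_j:=\lambda_F(p^{2j})$. Applying \eqref{Hecke} at the odd points $n=2m+5$ and $n=2m+7$ and cancelling the nonzero factor $\lambda_F(p)$ gives $e_{m+2}=-e_{m+1}$ and $e_{m+3}=e_{m+1}$; applying it at the even points $n=2m+4$ and $n=2m+6$ gives $e_{m+2}=-Ae_{m+1}-e_m$ and $e_{m+3}=-Ae_{m+2}-e_{m+1}$. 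Combining these four identities yields $e_m=(1-A)e_{m+1}$ and $(A-2)e_{m+1}=0$. If $e_{m+1}=0$ then also $e_m=e_{m+2}=0$, so $\lambda_F(p^{2m})=\lambda_F(p^{2m+1})=\lambda_F(p^{2m+2})=\lambda_F(p^{2m+3})=0$, contradicting \lemref{even}(2) with $t=2m-1$. Otherwise $A=2$; then \eqref{Hecke} at $n=2m+3$ reads $\lambda_F(p^{2m-1})=\lambda_F(p)\,(e_{m+1}+e_m)=\lambda_F(p)\,(e_{m+1}-e_{m+1})=0$, so the same four-fold vanishing now holds with $m$ replaced by $m-1$. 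Iterating this descent, after finitely many steps we either hit the \lemref{even}(2) contradiction or reach $\lambda_F(p^1)=0$, contradicting $\lambda_F(p)\ne 0$.

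The main obstacle is the branch $A=2$: there the vanishing of four consecutive odd powers does not by itself force four consecutive powers to vanish, and one must feed the extra relation $A=2$ back into \eqref{Hecke} at the lower edge of the window to slide it down; the bookkeeping of index constraints (\eqref{Hecke} holds only for $n\ge 3$, which is why one needs $m\in\N$) is the point requiring care. A cleaner, if less elementary, alternative packages everything via generating functions: summing \eqref{Hecke} gives $\sum_{n\ge 0}\lambda_F(p^n)T^n=(1-T^2/p)/Q_p(T)$ with $Q_p(T)=1-\lambda_F(p)T+AT^2-\lambda_F(p)T^3+T^4$, so the odd part equals $\lambda_F(p)\,T\,(1-T^2/p)(1+T^2)/D(T^2)$ for the polynomial $D(U)=(1+AU+U^2)^2-\lambda_F^2(p)U(1+U)^2$ of degree $4$; when $\lambda_F(p)=0$ this odd part is identically zero (part (1)), and when $\lambda_F(p)\ne 0$ the numerator is nonzero of degree less than $\deg D$, so in lowest terms the denominator still has positive degree, and hence the coefficient sequence $\big(\lambda_F(p^{2m+1})\big)_{m\ge 0}$ can have no four consecutive zeros (part (2)).
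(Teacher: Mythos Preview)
Your main argument is correct and essentially matches the paper's proof: part~(1) is the same contrapositive induction, and for part~(2) both you and the paper derive $A=2$ from the four odd vanishings (the paper cites \lemref{even}(2) directly where you branch into Case~1), then use the relation $e_m=-e_{m+1}$ together with \eqref{Hecke} at $n=2m+3$ to descend, finally reaching a contradiction at the bottom (you get $\lambda_F(p)=0$; the paper computes $\lambda_F(p^5)=\lambda_F(p)/p$, which is the same relation read one step differently).

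Your generating-function alternative is a genuinely different packaging that the paper does not pursue. Once you write $\sum_{m\ge 0}\lambda_F(p^{2m+1})U^m=\lambda_F(p)(1-U/p)(1+U)/D(U)$ with $D(U)=(1+AU+U^2)^2-\lambda_F^2(p)\,U(1+U)^2$, the point is not really that the reduced denominator has positive degree but that $D$ has degree~$4$ with \emph{both} leading and constant coefficients equal to~$1$; hence the order-$4$ recurrence can be run forward and backward, so four consecutive zeros starting at any $m\ge 1$ would force the entire sequence to vanish, contradicting $\lambda_F(p)\ne 0$. This route is cleaner and avoids the descent bookkeeping entirely; you might tighten the final sentence to make the invertibility of the recurrence explicit rather than invoking ``positive degree in lowest terms,'' which by itself would only give a weaker conclusion.
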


\begin{proof}
We shall show by induction on $m$ that
$\lambda_F(p)=0$ implies that $\lambda_F(p^{2m+1})=0$
for all $m \ge 0$.  It is clearly true for $m=0,1$. 
Using \eqref{Hecke}, we have 
$$
\lambda_F(p^{2m+1}) 
= 
\left[ \lambda_F(p^2) + \frac{1}{p}\right] \lambda_F(p^{2m-1}) 
 - \lambda_F(p^{2m-3}).
$$
By induction hypothesis, one knows that 
$$
\lambda_F(p^{2m - 1}) =0 =\lambda_F(p^{2m - 3})
$$ 
and hence $\lambda_F(p^{2m+1}) = 0$. This completes the 
proof of the first part.

To prove the second part, assume that there exists
$m_0 \in \N$ such that 
\begin{equation}\label{eq-i}
\lambda_F(p^{2(m_0 + i) +1}) = 0
\end{equation}
for all $0 \le i \le 3$. Using \eqref{Hecke} and \eqref{eq-i} for
$i=2,3$, we have
$$
\lambda_F(p^{2m_0 + 6}) 
= - \lambda_F(p^{2m_0 + 4})
= \lambda_F(p^{2m_0 + 2}) 
$$
as $\lambda_F(p) \ne 0$. Again using \eqref{Hecke} 
and \eqref{eq-i}, we get
$$
\lambda_F(p^{2m_0 + 6}) 
~=~
- [\lambda_F^2(p) - \lambda_F(p^2) - \frac{1}{p}]
\lambda_F(p^{2m_0 +4}) - \lambda_F(p^{2m_0 +2}).
$$
Hence
\begin{eqnarray*}
0 = \lambda_F(p^{2m_0 + 6})  + \lambda_F(p^{2m_0 + 4}) 
&=&
- [\lambda_F^2(p) - \lambda_F(p^2) - \frac{1}{p} -1]
\lambda_F(p^{2m_0 +4}) - \lambda_F(p^{2m_0 +2})\\
&=&
- [\lambda_F^2(p) - \lambda_F(p^2) - \frac{1}{p} -2]
\lambda_F(p^{2m_0 +2}).
\end{eqnarray*}
This implies that 
$$
\lambda_F^2(p) - \lambda_F(p^2) - \frac{1}{p} =2
$$
as $\lambda_F(p^{2m_0 +2}) \ne 0$ by second part of 
\lemref{even}. Replacing 
$$
\lambda_F(p^{2m_0 +4}) 
~=~ 
-2\lambda_F(p^{2m_0 +2})  - \lambda_F(p^{2m_0}) 
$$
in the relation
$$
0 = \lambda_F(p^{2m_0 + 5}) 
~=~ 
\lambda_F(p)[\lambda_F(p^{2m_0 +4})  + \lambda_F(p^{2m_0 +2})],
$$
we get $\lambda_F(p^{2m_0 +2}) +  \lambda_F(p^{2m_0}) =0$
as $\lambda_F(p) \ne 0$.
Then 
$$
0= \lambda_F(p^{2m_0 + 3})
~=~
\lambda_F(p)[\lambda_F(p^{2m_0 +2}) +  \lambda_F(p^{2m_0})]
- \lambda_F(p^{2m_0 -1})
~=~
- \lambda_F(p^{2m_0 -1}).
$$ 
This shows that if $\lambda_F(p) \ne 0$ and
$\lambda_F(p^{2(m_0 + i) +1}) = 0$ for all $0 \le i \le 3$ and
for some $m_0\in \N$, then $\lambda_F(p^{2m_0 -1})=0$.
Arguing similarly and using induction, we can now show that
$\lambda_F(p^{2m + 1})=0$ for all $1 \le m \le m_0 + 3$.
Note that
\begin{eqnarray*}
0 ~=~ 
\lambda_F(p^5) 
&=&  
\lambda_F(p)[\lambda_F(p^4) + \lambda_F(p^2) -1]\\
&=&
\lambda_F(p)[ -\lambda_F(p^2) + \lambda_F^2(p) -2] \\
&=&
\frac{1}{p} \lambda_F(p),
\end{eqnarray*}
a contradiction to our hypothesis. This completes the
proof of the second part of \lemref{odd}.
\end{proof}

\begin{rmk}\label{e-power}
Let $F \in S_k(\Gamma_2)$ be a Hecke eigenform which lies in the 
orthogonal complement of the Maass subspace with normalized Hecke 
eigenvalues $\lambda_F(n)$ for $n \in \N$. If
$\lambda_F(p) \ne 0$, then there does not exist $m \in \N$ such that 
$\lambda_F(p^{2(m + i)}) = 0$ for all $0 \le i \le 3$.
\end{rmk}

\begin{proof}
Suppose that there exists $m_0 \in \N$ such that
$$
\lambda_F(p^{2(m_0 + i)}) = 0, 
\phantom{m}\text{for } \phantom{m}
0 \le i \le 3.
$$
Arguing as in \lemref{odd}, then we have 
$2 + 1/p + \lambda_F(p^2) - \lambda_F^2(p) = 0$ and 
$\lambda_F(p^{2m}) = 0$ for $1 \le m \le m_0 + 3$ 
as $\lambda_F(p) \ne 0$. This
implies that $\lambda_F^2(p) = 2 + 1/p$ and hence
$\lambda_F(p^4) = -1$, a contradiction. 
\end{proof}

We now complete the proof of \thmref{th2}.

\begin{proof}
Without loss of generality, we can assume that 
$\lambda_F(p) \lambda_G(p) = 0$ and 
$\lambda_F(p^2) \lambda_G(p^2) = 0$, 
otherwise we are done. 

First suppose that $\lambda_F(p) = \lambda_G(p) = \lambda_F(p^2) 
= \lambda_G(p^2) = 0$. Then using the identity 
\eqref{Hecke}, we see that $\lambda_F(p^4)\lambda_G(p^4)= 1$. 
Hence we are done. 

Now we assume that $\lambda_F(p) = \lambda_G(p) = \lambda_F(p^2) = 0$ 
but $\lambda_G(p^2) \ne 0$. Then 
$$
\lambda_G(p^6) 
= \left[ \lambda_G(p^2) + \frac{1}{p} \right] \lambda_G(p^{4})  
- \lambda_G(p^{2})
$$
implies that either $\lambda_G(p^4) \ne 0$ or $\lambda_G(p^6) \ne0$.
Now using \lemref{even}, we are done. 

Next assume that $\lambda_F(p) = 0 
= \lambda_F(p^2)$ and $\lambda_G(p) \ne 0$.
Using \lemref{even}, we know that
$\lambda_F(p^{2n}) \ne 0$ for all $ n \ge 2$. 
Since $\lambda_G(p) \ne 0$,  by \rmkref{e-power}, we have 
at least one of 
$$
\lambda_G(p^4), ~\lambda_G(p^6), 
~\lambda_G(p^8), ~\lambda_G(p^{10})
$$ 
is non-zero. Hence we are done in this case. 

Finally, we assume that $\lambda_F(p) = 0, \lambda_F(p^2) \ne 0$ 
and $\lambda_G(p) \ne 0, \lambda_G(p^2) = 0$. 
Since $\lambda_F(p) = 0$ we know by Lemma \ref{odd} that
$\lambda_F(p^{2n-1}) = 0$ for all $n \in \N$. 

We first consider the case when $\lambda_F(p^4) = 0$. 
Then using \eqref{Hecke}, we have $\lambda_F(p^n) \ne 0$ 
for $n = 6, 8, 10, 12$.  
Since $\lambda_G(p) \ne 0$, using \rmkref{e-power} we are done.

Now assume that $\lambda_F(p^4) \ne 0$ and $\lambda_G(p^4) = 0$, 
otherwise we are done. We will show in this case that 
$\lambda_G(p^6) \ne 0$ except when $p=2$.
Since $\lambda_G(p^4) = 0$, we get
\begin{equation}\label{eq-id}
[2 + 1/p - \lambda_G^2(p) ]\lambda_G^2(p) = 1.
\end{equation}
Using \eqref{eq-id} and \eqref{Hecke}, we have
\begin{eqnarray*}
\lambda_G(p^6) 
&=& 
- \lambda_G^2(p) + \lambda_G(p)\lambda_G(p^3)
[1 + \frac{1}{p} - \lambda_G^2(p)]\\
&=&
- \lambda_G^2(p) + \lambda_G^2(p^3)
~~=~~
\frac{1}{p} - \lambda_G^2(p).
\end{eqnarray*}
Again using \eqref{eq-id}, we see that 
$1/p - \lambda_G^2(p)=0$ only when $p =2$.
If $\lambda_F(p^6) \ne 0$, we are done except when
$p = 2$. So without loss of generality, we can assume that
$\lambda_F(p^6)=0$ when $p \ne 2$. Then
$$
1 + \lambda_F(p^4)
~=~ 
[\lambda_F(p^2) +\frac{1}{p}] \lambda_F(p^2), 
\phantom{m}
\lambda_F(p^2)
~=~ 
[\lambda_F(p^2) +\frac{1}{p}] \lambda_F(p^4) 
$$
and hence
$$
\lambda_F(p^8) = - \lambda_F(p^4),
\phantom{m}
\lambda_F(p^{10}) = - \lambda_F(p^2),
\phantom{m}
\lambda_F(p^{12}) = -1,
\phantom{m}
\lambda_F(p^{14}) = -\frac{1}{p}.
$$
We are now done by \rmkref{e-power}.

It only remains to prove the case when $p = 2$ and 
$\lambda^2_G(2) = 1/2$. In this case, 
$$
\lambda_G(2^8) = - 1
\phantom{m}\text{and}\phantom{m}
\lambda_G(2^{10}) = - 1/2.
$$ 
Now note that either $\lambda_F(2^8) \ne 0$ or
$\lambda_F(2^8) = 0$ and
$\lambda_F(2^{10}) = - \lambda_F(2^6) \ne 0$. 
This completes the proof of \thmref{th2}. 
\end{proof}

\smallskip

\section{Proof of \thmref{prop1} and \corref{th1}}

\smallskip

In this section,  we shall complete the proof of \thmref{prop1}
and \corref{th1}.  In order to prove \thmref{prop1}, we first establish 
a relation between the functions 
$L(F, G; s)$ and $L(F \times G, s)$. More precisely, we show the following. 

\begin{lem}\label{lem19}
Let $F \in S_{k_1}(\Gamma_2)$ and $G \in S_{k_2}(\Gamma_2)$ be as in  
\thmref{prop1}. Then for $\Re(s) > 1$, one has 
\begin{equation}
L(F, G; s) = g(s) L(F \times G; s),
\end{equation}
where
\begin{equation}\label{eq3.1}
g(s) := \prod_{p \in \mathcal P} g_p(p^{-s}).
\end{equation}
Here $g_p(X)$'s are polynomials of degree $\le 15$
and the Euler product on the right hand side of \eqref{eq3.1}
is absolutely convergent for $\Re(s) > 1/2$.
Further, there exists an absolute constant $A > 0$ such that 
$$
g(s) 
\ll 
\sigma^A\left( \sigma - \frac{1}{2} \right)^{-A}
$$
holds uniformly for any $\sigma :=\Re(s) > 1/2$.
\end{lem}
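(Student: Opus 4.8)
The plan is to factor the Euler product of $L(F,G;s)$ prime by prime using \thmref{GM1}, glue the local factors into the relation $L(F,G;s)=g(s)L(F\times G,s)$ valid for $\Re(s)>1$, and then extend the convergence of $g(s)=\prod_p g_p(p^{-s})$ into $\Re(s)>1/2$ by bounding the coefficients of every $g_p$ uniformly in $p$. First I would fix a prime $p$, set $X=p^{-s}$, and use that the $p$-Euler factor of $\zeta(2s+1)$ is $(1-p^{-1}X^2)^{-1}$; then the product expansions in \eqref{Spinor} give the formal power series identities
\begin{equation}\label{myloc}
\sum_{n\ge 0}\lambda_F(p^n)X^n=\frac{1-p^{-1}X^2}{\prod_{i=1}^4(1-\alpha_{p,i}X)}
\quad\text{and}\quad
\sum_{n\ge 0}\lambda_G(p^n)X^n=\frac{1-p^{-1}X^2}{\prod_{j=1}^4(1-\beta_{p,j}X)},
\end{equation}
in which each numerator has degree $2$, strictly below the degree $4$ of the denominator, and vanishing first derivative at $0$. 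Applying \thmref{GM1} to these data (grouping equal Satake parameters so the $\alpha$'s and $\beta$'s are distinct) together with $\lambda_F(1)=\lambda_G(1)=1$ produces a polynomial $g_p(X)\in\C[X]$ with $g_p(0)=1$, $g_p'(0)=0$, and
\begin{equation}\label{myloc2}
\sum_{n\ge 0}\lambda_F(p^n)\lambda_G(p^n)X^n=\frac{g_p(X)}{\prod_{1\le i,j\le 4}(1-\alpha_{p,i}\beta_{p,j}X)}.
\end{equation}

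The denominator in \eqref{myloc2} has degree $16$, and the partial-fraction computation underlying \thmref{GM1} gives the bound $\deg g_p\le 15$: from \eqref{myloc}, for every $n\ge 0$ the value $\lambda_F(p^n)\lambda_G(p^n)$ is of the form $\sum_\gamma P_\gamma(n)\gamma^n$, the sum over the distinct numbers $\gamma=\alpha_{p,i}\beta_{p,j}$, with $\deg P_\gamma$ strictly less than the multiplicity of $\gamma$ in $\prod_{1\le i,j\le 4}(1-\alpha_{p,i}\beta_{p,j}X)$ (this uses $\ell_i m_j\ge \ell_i+m_j-1$ for the local multiplicities $\ell_i,m_j$), so the right side of \eqref{myloc2} is already a proper rational function with denominator of degree $16$. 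I would then assemble the global identity: by Weissauer's Ramanujan–Petersson bound $\lambda_F(n)\lambda_G(n)\ll_\epsilon n^\epsilon$, both $L(F,G;s)$ and $L(F\times G,s)$ converge absolutely with Euler products for $\Re(s)>1$; since $\lambda_F\lambda_G$ is multiplicative, the $p$-factor of $L(F,G;s)$ is the left side of \eqref{myloc2}, while the denominator on the right of \eqref{myloc2} is the inverse $p$-factor of $L(F\times G,s)$ by \eqref{Ran-Selberg}. Multiplying \eqref{myloc2} over all primes yields $L(F,G;s)=g(s)\,L(F\times G,s)$ for $\Re(s)>1$, with $g(s)=\prod_p g_p(p^{-s})$ as in \eqref{eq3.1}.

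Finally I would control the size of $g_p$ and hence of $g$. Writing $g_p(X)=1+\sum_{m=2}^{15}r_{p,m}X^m$ and clearing denominators in \eqref{myloc2}, each $r_{p,m}$ is a sum of at most $16$ products, each being a coefficient of the degree-$16$ polynomial $\prod_{1\le i,j\le 4}(1-\alpha_{p,i}\beta_{p,j}X)$ times a value $\lambda_F(p^n)\lambda_G(p^n)$ with $n\le m\le 15$. By Weissauer's bound $|\alpha_{p,i}|=|\beta_{p,j}|=1$, those coefficients are bounded by $\binom{16}{k}$, and expanding $\prod_{i=1}^4(1-\alpha_{p,i}X)^{-1}$ gives $|\lambda_F(p^n)|\le\binom{n+3}{3}+\binom{n+1}{3}$ (similarly for $G$); hence $|r_{p,m}|\le C_0$ for an absolute constant $C_0$, uniformly in $p$ and $m$. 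Consequently $|g_p(p^{-s})-1|\le C_0\,p^{-2\sigma}$ for every $\sigma:=\Re(s)>0$, so $\prod_p g_p(p^{-s})$ converges absolutely for $\sigma>1/2$; moreover, since $|g(s)|\le\exp\big(\sum_p|g_p(p^{-s})-1|\big)$, one gets
\begin{equation*}
|g(s)|\le\exp\Big(C_0\sum_p p^{-2\sigma}\Big)\le\zeta(2\sigma)^{C_0}\ll\Big(\sigma-\tfrac12\Big)^{-C_0}
\end{equation*}
for $1/2<\sigma\le 1$ (using that $(2\sigma-1)\zeta(2\sigma)$ is bounded on $(1/2,1]$), while $|g(s)|=O(1)$ for $\sigma\ge 1$. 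Comparing with $\sigma^A(\sigma-\tfrac12)^{-A}=\big(1+\tfrac{1/2}{\sigma-1/2}\big)^A\ge 1$, one obtains $g(s)\ll\sigma^A(\sigma-\tfrac12)^{-A}$ uniformly for $\sigma>1/2$ with $A$ an absolute multiple of $C_0$.

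The step I expect to be the main obstacle is precisely this uniform-in-$p$ bound on the coefficients $r_{p,m}$: it is what pushes the Euler product defining $g(s)$ past $\Re(s)=1$ into $\Re(s)>1/2$, and it relies on \emph{both} Weissauer's $|\alpha_{p,i}|=1$ and the vanishing linear term $g_p'(0)=0$ furnished by \thmref{GM1} (without the latter the product would converge only for $\Re(s)>1$). The remaining ingredients—the formal identities \eqref{myloc} and \eqref{myloc2}, the degree count, and the passage to the global Euler product—are routine.
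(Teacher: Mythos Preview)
Your proof is correct and follows essentially the same route as the paper: local factorization via \thmref{GM1}, the uniform coefficient bound on $g_p$ coming from Weissauer's $|\alpha_{p,i}|=|\beta_{p,j}|=1$ together with $g_p'(0)=0$, and then absolute convergence of $\prod_p g_p(p^{-s})$ for $\Re(s)>1/2$. The only difference is cosmetic, in the extraction of the final bound on $g(s)$: the paper majorizes $|g_p(p^{-s})|$ by $h_p(\sigma)=1+Ap^{-2\sigma}+\cdots$ and observes that $\zeta(2\sigma)^{-A}\prod_p h_p(\sigma)$ has an absolutely convergent Euler product for $\sigma>1/2$, whereas you use the direct inequality $\prod_p|g_p(p^{-s})|\le\exp\big(\sum_p|g_p(p^{-s})-1|\big)\le\exp\big(C_0\sum_p p^{-2\sigma}\big)\le\zeta(2\sigma)^{C_0}$ (valid since $\sum_p p^{-2\sigma}\le\log\zeta(2\sigma)$); both yield the stated estimate $g(s)\ll\sigma^A(\sigma-\tfrac12)^{-A}$.
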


\begin{proof}
Consider the $L$-functions
$$
L(F, s)
:=
\sum_{n=1}^\infty \frac{ \lambda_F(n)}{n^s}
\tab \text{ and }\tab
L(G, s) 
:= \sum_{n=1}^\infty \frac{ \lambda_G(n)}{n^s}.
$$
These $L$-functions are absolutely convergent for $\Re(s) > 1$ 
and by \eqref{Spinor}, we have
$$
L(F, s) 
= \frac{Z_F(s)}{\zeta(2s + 1)}
\tab \text{ and }\tab
L(G, s) 
= \frac{Z_G(s)}{\zeta(2s + 1)}.
$$
Here $Z_F(s), Z_G(s)$ are the spinor zeta functions associated to 
$F$ and $G$ respectively. Since $\lambda_F(n)$ and $\lambda_G(n)$ 
are multiplicative, again using \eqref{Spinor}, we can write 
\begin{eqnarray*}
\sum_{n = 0}^\infty  \lambda_F(p^n)T^n 
= 
\frac{1 - \frac{1}{p} T^2}{ \prod_{1 \le i \le 4}(1 - \alpha_{p,i} T)}
\tab\text{ and }\tab
\sum_{n = 0}^\infty \lambda_G(p^n)T^n 
= 
\frac{1 - \frac{1}{p} T^2}{\prod_{ 1 \le i \le 4}(1 - \beta_{p,i} T)}.
\end{eqnarray*}
Now by \thmref{GM1}, one has 
$$
\sum_{n = 0}^\infty \lambda_F(p^n)\lambda_G(p^n)T^n
= 
\frac{g_p(T)}{\prod_{1 \le i, j \le 4}(1 - \alpha_{p,i} \beta_{p,j} T)},
$$
where $g_p(T) \in \C[T]$ is a polynomial of degree at most $15$.
Also $g_p(0) = 1$ and $g'_p(0) = 0$, where $g_p'$ is the
derivative of $g_p$. The fact
$| \alpha_{p, i} | = | \beta_{p, j} | = 1$ for $ 1 \le i, j \le 4$, 
implies that the coefficients of $g_p(T)$ are bounded
by an absolute constant.  Since $g_p(0) = 1$, 
the coefficients of $T$ in the polynomial $g_p(T)$ is zero 
and other coefficients are bounded by an absolute
constant, it is easy to conclude that 
$$
\prod_{p \in \mathcal P} g_p(p^{-s})
$$
is absolutely convergent for $\Re(s) > 1/2$. This shows that 
for $\sigma > 1$, we have
$$
L(F, G; s) = L(F \times G; s) g(s).
$$ 
It remains to show that $g(s)$ has the required bound. Let 
$$
g_p(T) := 1 + a(p^2) T^2 + \cdots + a(p^{15}) T^{15}, 
$$
where $a(p^i) \in \C$ and $a(p^i)$ are bounded by an absolute constant
for all $2 \le i \le 15$ and for all $p$. 
Let $A > 0$ be an integer such that $ | a(p^2) | \le A$ for all $p \in \mathcal P$. 
Thus 
$$
| g_p(p^{-s}) | 
= 
\left| 1 + \sum_{2 \le n \le 15} a(p^n) p^{-ns} \right| 
\le h_p(\sigma),
$$
where 
$$
h_p(s) 
:= 
1 + Ap^{-2s} + | a(p^3) | p^{-3s} + \cdots + | a(p^{15}) | p^{-15s}.
$$
Now note that 
\begin{equation}\label{bound}
\left(1 - p^{-2s} \right)^A h_p(s)
= 
1 + O\left( p^{-3\sigma} \right).
\end{equation}
The left hand side of \eqref{bound} is nothing but the $p$-th Euler 
factor of the Dirichlet series 
$$
\zeta(2s)^{-A} h(s), 
\tab \text{ where }\tab 
h(s) := 
\prod_{p \in \mathcal P} h_p(s).
$$
Hence for all $\sigma > 1/2$, we have 
$$
g(s)
\ll 
\left( \frac{ \sigma }{ \sigma - 1/2} \right)^A.
$$
This completes the proof of \lemref{lem19}.
\end{proof}

As an application of the above lemma, one can derive the 
following analytic properties of the $L$-function $L(F, G; s)$. 

\begin{lem}\label{lem1}
Let $F \in S_{k_1}(\Gamma_2)$ and $G \in S_{k_2}(\Gamma_2)$ 
be as in \thmref{prop1}.  Then the function $L(F, G; s)$ 
admits an analytic continuation to $\Re(s) >1/2$. 
\end{lem}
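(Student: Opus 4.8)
The plan is to read off the continuation from the factorization already established in \lemref{lem19}, using the analytic properties of the Rankin--Selberg $L$-function from \thmref{PSS}. Concretely, \lemref{lem19} gives, for $\Re(s) > 1$,
\[
L(F,G;s) \;=\; g(s)\, L(F\times G;s), \qquad g(s) = \prod_{p\in\mathcal P} g_p(p^{-s}),
\]
with each $g_p$ a polynomial of degree at most $15$ whose coefficients are bounded by an absolute constant and whose coefficient of $T$ vanishes. First I would note that, exactly because the linear coefficient of $g_p$ is zero while the others are $O(1)$, one has $\sum_p \sup_{\Re(s)\ge \sigma_0} |g_p(p^{-s}) - 1| < \infty$ for every $\sigma_0 > 1/2$; hence the infinite product defining $g$ converges locally uniformly on $\{\Re(s) > 1/2\}$ and $g$ is holomorphic there (this, together with the bound $g(s) \ll \sigma^A(\sigma-\tfrac12)^{-A}$, is precisely the content of \lemref{lem19}).

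Next I would invoke \thmref{PSS}. The hypothesis that $F$ and $G$ lie in different eigenspaces excludes the exceptional case of that theorem: if $k_1 \ne k_2$ this is clear, while if $k_1 = k_2$ then $F$ and $G$ having distinct Hecke eigenvalue systems means $\mu_F(n) \ne \mu_G(n)$ for some $n$. Therefore $L(F\times G, s)$ is entire.

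It then follows that the right-hand side $g(s)\, L(F\times G;s)$ is holomorphic on the half-plane $\Re(s) > 1/2$, being a product of the holomorphic function $g$ and the entire function $L(F\times G;\cdot)$. Since it coincides with $L(F,G;s)$ on $\Re(s) > 1$, the identity theorem shows it furnishes the analytic continuation of $L(F,G;s)$ to $\Re(s) > 1/2$, which is the assertion of the lemma. I do not expect any genuine obstacle here: everything reduces to the two cited results, and the only point meriting a line of justification is that ``different eigenspaces'' rules out the simple pole of $L(F\times G;s)$ at $s = 1$, as explained above.
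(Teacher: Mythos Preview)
Your proof is correct and follows essentially the same route as the paper: factor $L(F,G;s)=g(s)\,L(F\times G;s)$ via \lemref{lem19}, use that $g$ is holomorphic on $\Re(s)>1/2$, and invoke \thmref{PSS} for the analytic continuation of $L(F\times G;s)$. The only difference is that you spell out explicitly why the ``different eigenspaces'' hypothesis rules out the simple pole at $s=1$, a point the paper's proof leaves implicit.
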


\begin{proof}
We know from \lemref{lem19} that for $\sigma >1$
$$
L(F, G; s) = g(s) L(F \times G, s).
$$
Now holomorphicity of $g(s)$ to $\Re(s) > 1/2$ along with the
fact that $L(F \times G, s)$ has analytic continuation 
to $\C$ (see \thmref{PSS}) implies that $L(F, G; s)$
can be continued analytically upto $\sigma >1/2$.
\end{proof}
To prove \thmref{prop1}, we also need the following 
convexity bound.

\begin{lem}\label{lem20}
Let $F \in S_{k_1}(\Gamma_2)$ and $G \in S_{k_2}(\Gamma_2)$ be as in  
\thmref{prop1}. Then for any $\epsilon > 0$ and $0 < \delta < 1$, one has 
\begin{equation}
L(F\times G, \delta + it) 
\ll_{\epsilon} 
\text{ max}\{ k_1, k_2 \}^{6(1- \delta + \epsilon)} |3 + it~|^{8(1 - \delta + \epsilon)}.
\end{equation}
\end{lem}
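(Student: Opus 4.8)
The plan is to obtain the stated bound by applying the standard Phragm\'en--Lindel\"of convexity principle to the function $L(F\times G,s)$, for which we already have all the needed analytic input from \thmref{PSS}. First I would record that $L(F\times G,s)$ is a degree-$16$ Euler product and hence, by the general theory (or by writing it as a product/quotient of $\GL_n$ automorphic $L$-functions via the work of Pitale--Saha--Schmidt), possesses a completed $L$-function $\Lambda(F\times G,s) = Q^{s/2}\,\gamma(s)\,L(F\times G,s)$ satisfying a functional equation $s \mapsto 1-s$, where the analytic conductor is $\asymp \max\{k_1,k_2\}^{C}$ for an absolute constant $C$ and the archimedean factor $\gamma(s)$ is a product of $8$ shifted $\Gamma$-factors with shifts governed by the weights $k_1,k_2$. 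In the generic case $\Lambda$ is entire; in the exceptional case $k_1=k_2$, $\mu_F=\mu_G$ there is a simple pole at $s=1$ (and $s=0$), which one removes by multiplying by $s(s-1)$ — this does not affect the final polynomial-in-$t$ bound at a fixed $\delta\in(0,1)$ since we are away from $s=1$ on the relevant line, or one simply notes the pole contributes an admissible term.

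The key steps, in order, are: (i) on the line $\Re(s)=1+\epsilon$, absolute convergence of the Euler product \eqref{Ran-Selberg} together with $|\alpha_{p,i}|=|\beta_{p,j}|=1$ gives $L(F\times G, 1+\epsilon+it) \ll_\epsilon 1$, uniformly in $t$ and in the weights; (ii) on the line $\Re(s) = -\epsilon$, the functional equation combined with Stirling's formula for the $8$ gamma factors converts this into a bound of the shape $\ll_\epsilon \big(\mathfrak{q}(F\times G)\,(3+|t|)^{8}\big)^{1/2+\epsilon}$, where $\mathfrak{q}(F\times G) \ll \max\{k_1,k_2\}^{12}$ is the analytic conductor — the exponent $8$ being half the degree $16$, and the conductor exponent $12$ being $6$ per "line-unit"; (iii) interpolate between these two vertical lines by Phragm\'en--Lindel\"of (using that $L(F\times G,s)$ is of finite order in the strip, which follows from the functional equation plus convexity in a standard way), obtaining for $0<\delta<1$ the bound $L(F\times G,\delta+it) \ll_\epsilon \max\{k_1,k_2\}^{6(1-\delta+\epsilon)}|3+it|^{8(1-\delta+\epsilon)}$. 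The exponents $6$ and $8$ in the statement are exactly what this linear interpolation produces: at $\delta=1$ one gets $O(1)$ and at $\delta=0$ one gets the $1/2$-power of conductor and of $(3+|t|)^{8}$, i.e. $6$ and $8$ respectively after accounting for the conductor's $\max\{k_1,k_2\}^{12}$.

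The main obstacle — and the only point requiring genuine care rather than bookkeeping — is pinning down the archimedean and conductor data of $L(F\times G,s)$ precisely enough to justify the exponents $6$ and $8$, and in particular controlling the \emph{uniformity in the weights} $k_1,k_2$. For this I would invoke the explicit description of the spinor $L$-functions and their Rankin--Selberg product from \cite{PSS}: each $Z_F(s)$ has degree $4$ with gamma factors whose shifts are determined by $k_1$, so the degree-$16$ product $L(F\times G,s)$ has $8$ pairs of conjugate gamma factors with shifts of size $O(\max\{k_1,k_2\})$, whence the analytic conductor at height $t$ is $\ll \max\{k_1,k_2\}^{12}(3+|t|)^{8}$. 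Once this is in hand, steps (i)--(iii) are entirely routine: the only mild subtlety is handling the possible pole at $s=1$ in the exceptional case, which is harmless since the inequality is asserted for $0<\delta<1$ and the standard trick of working with $s(s-1)L(F\times G,s)$ changes nothing at a fixed $\delta$ bounded away from $1$ (and in any case the arguments using this lemma will be in the generic, pole-free, situation where $F$ and $G$ lie in different eigenspaces).
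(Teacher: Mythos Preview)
Your proposal is correct and follows essentially the same route as the paper: bound on $\Re(s)=1+\epsilon$ by absolute convergence, transfer to $\Re(s)=-\epsilon$ via the functional equation from \cite{PSS} together with Stirling, and interpolate by Phragm\'en--Lindel\"of (the paper uses Rademacher's formulation). The only place where the paper is more explicit is the archimedean input: rather than asserting that the analytic conductor is $\ll \max\{k_1,k_2\}^{12}$, it computes the local parameter of $\pi_\infty\times\pi'_\infty$ as a tensor product of two-dimensional Weil group representations $\varphi_{2k_i-3}\oplus\varphi_1$, reads off the exact list of $\Gamma_\C$ and $\Gamma_\R$ factors (seven $\Gamma_\C$'s and two $\Gamma_\R$'s when $k_1>k_2$), and then bounds the gamma quotient directly to get $k_1^{6(2c-1)}|1+it|^{8(2c-1)}$; your heuristic ``eight complex pairs with shifts $O(k)$'' is slightly off in its fine structure (some shifts are $O(1)$ and two factors are $\Gamma_\R$'s), but the resulting upper bound and the exponents $6$ and $8$ are the same.
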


To prove \lemref{lem20} we shall use the
following strong convexity principle due to Rademacher.

\begin{prop}\rm [Rademacher]\label{Rademacher}
Let $g(s)$ be holomorphic and of finite order in $a < \Re(s) < b$, 
and continuous on the closed strip $a \le \Re(s) \le b$. 
Also let
$$
| g(a + it) | \le E  | P + a + it |^\alpha 
\phantom{m} \text{ and }\phantom{m}
| g(b + it) | \le F  | P + b + it |^\beta, 
$$
where $E, F$ are positive constants and $P, \alpha,\beta$ are real 
constants satisfying 
$$
P + a > 0, 
\phantom{m}
\alpha \ge \beta.
$$
Then for $a < \sigma  < b $, we have 
$$
| g(s) | 
\le 
(E | P + s |^\alpha )^{\frac{b - \sigma}{b - a}}
(F | P + s |^\beta )^{\frac{\sigma - a}{b - a}}.
$$
\end{prop}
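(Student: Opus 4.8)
The plan is to recast the asserted pointwise bound as a statement that one \emph{subharmonic} function lies below a suitable \emph{superharmonic} majorant, and then to pass from the boundary lines to the interior by a Phragm\'en--Lindel\"of argument. Write $s=\sigma+it$ and set $u(s):=\log|g(s)|$, which is subharmonic on the open strip (and takes the value $-\infty$ at the zeros of $g$, causing no difficulty). Introduce the affine functions
$$
\Phi(\sigma):=\alpha\,\frac{b-\sigma}{b-a}+\beta\,\frac{\sigma-a}{b-a},\qquad
\mu(\sigma):=\log E\,\frac{b-\sigma}{b-a}+\log F\,\frac{\sigma-a}{b-a},
$$
and define the candidate majorant
$$
V(s):=\Phi(\sigma)\,\log|P+s|+\mu(\sigma).
$$
Since $P+a>0$ we have $\Re(P+s)\ge P+a>0$ throughout the closed strip, so $\log(P+s)$ is furnished by the principal branch and $\log|P+s|=\Re\log(P+s)$ is harmonic there. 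In logarithmic form the assertion is exactly $u(s)\le V(s)$ for $a<\sigma<b$, because exponentiating $V$ reproduces precisely $(E|P+s|^\alpha)^{(b-\sigma)/(b-a)}(F|P+s|^\beta)^{(\sigma-a)/(b-a)}$.

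The two hypotheses enter precisely in making $V$ superharmonic. Since $\mu$ and $\log|P+s|$ are harmonic and $\Phi$ is affine, the product rule for the Laplacian gives
$$
\Delta V=2\,\Phi'(\sigma)\,\partial_\sigma\log|P+s|
=2\cdot\frac{\beta-\alpha}{b-a}\cdot\frac{P+\sigma}{(P+\sigma)^2+t^2},
$$
and this is $\le 0$ exactly because $\alpha\ge\beta$ and $P+\sigma\ge P+a>0$; hence $V$ is superharmonic and $\Omega:=u-V$ is subharmonic. On the boundary lines the growth hypotheses on $g$ give $u\le V$: on $\Re s=a$ one has $\Phi(a)=\alpha$ and $\mu(a)=\log E$, so $V=\log E+\alpha\log|P+s|\ge\log|g|$, and symmetrically on $\Re s=b$ with $\beta$ and $F$. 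Thus $\limsup\Omega\le 0$ along both edges.

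It remains to go from the edges to the interior, and this Phragm\'en--Lindel\"of step is the main obstacle, the strip being unbounded and $\Omega$ not a priori bounded. Here I would use the finite-order hypothesis on $g$, which forces $u$, and hence $\Omega$, to grow at most polynomially in $|t|$ on vertical lines. I would then subtract a small multiple of the explicit positive harmonic barrier
$$
\rho(s):=\cos\!\left(\frac{\pi\bigl(\sigma-\tfrac{a+b}{2}\bigr)}{b-a}\right)\cosh\!\left(\frac{\pi t}{b-a}\right),
$$
which is harmonic, vanishes on the two edges, is positive in the open strip, and grows like $e^{\pi|t|/(b-a)}$ as $|t|\to\infty$. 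For each $\epsilon>0$ the function $\Omega-\epsilon\rho$ is subharmonic, is $\le 0$ on the edges, and tends to $-\infty$ as $|t|\to\infty$ because $\rho$ dominates the polynomial growth of $\Omega$; applying the maximum principle on a tall rectangle whose horizontal sides are pushed to infinity yields $\Omega-\epsilon\rho\le 0$ throughout the strip, and letting $\epsilon\to 0$ gives $\Omega\le 0$, i.e.\ $u\le V$. The only delicate points are checking that the principal branch of $\log(P+s)$ is legitimate on the whole strip (guaranteed by $P+a>0$) and quantifying ``finite order'' so that $\rho$ genuinely overwhelms $\Omega$; both become routine once the superharmonic majorant $V$ has been identified, which is the real content of the argument.
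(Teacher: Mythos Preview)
The paper does not prove this proposition: it is quoted, with attribution to Rademacher, as a known input and then applied directly in the proof of the convexity bound for $L(F\times G,s)$. There is thus no argument in the paper to compare yours against.

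That said, your approach is the standard one. The crucial observation---that the interpolated majorant $V(s)=\Phi(\sigma)\log|P+s|+\mu(\sigma)$ is not merely harmonic but genuinely \emph{superharmonic}, with
\[
\Delta V \;=\; 2\,\Phi'(\sigma)\,\partial_\sigma\log|P+s|
\;=\; \frac{2(\beta-\alpha)}{b-a}\cdot\frac{P+\sigma}{(P+\sigma)^2+t^2}\;\le\;0
\]
forced precisely by the two hypotheses $\alpha\ge\beta$ and $P+a>0$---is correct and is the whole point of Rademacher's refinement over the bare three-lines lemma. (A naive reduction to Hadamard by dividing $g$ by the holomorphic function $(P+s)^{L(s)}$ with $L(s)=\bigl(\alpha(b-s)+\beta(s-a)\bigr)/(b-a)$ picks up an unwanted factor $\exp\bigl(\tfrac{(\alpha-\beta)t}{b-a}\arg(P+s)\bigr)\ge 1$ on unwinding and does not yield the stated bound; the superharmonic route is what makes the sharp inequality work.) Your boundary checks are fine.

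One small wrinkle in the Phragm\'en--Lindel\"of step: because your barrier $\rho$ vanishes on the vertical edges, on the horizontal sides $t=\pm T$ of the tall rectangle it gives no help near the corners, while there you only know $\Omega(\sigma,T)=O(\log T)$ uniformly in $\sigma$, not $\Omega\le 0$. So the boundary maximum of $\Omega-\epsilon\rho$ is not obviously $\le 0$. The clean fix is to replace $\rho$ by
\[
\rho_\delta(s)\;=\;\cos\!\Bigl(\frac{(\pi-\delta)\bigl(\sigma-\tfrac{a+b}{2}\bigr)}{b-a}\Bigr)\cosh\!\Bigl(\frac{(\pi-\delta)\,t}{b-a}\Bigr)
\]
for any fixed small $\delta>0$: this is still harmonic, is bounded below by $\sin(\delta/2)\cosh\bigl((\pi-\delta)t/(b-a)\bigr)>0$ on the entire closed strip (so no corner trouble), and its exponential growth still dominates the $O(\log T)$ bound on $\Omega$. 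With that adjustment your rectangle argument closes verbatim, and letting $\epsilon\to 0$ gives $u\le V$.
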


We now complete the proof of \lemref{lem20}.
\begin{proof}
Without loss of generality, let us assume that $k_1 \ge k_2 > 2$. 
It is known by \cite[sec. 5.1]{PSS} that $F$ (also $G$) can 
be associated to a cuspidal, automorphic 
representation $\pi$ (resp. $\pi'$) of $\rm GSp_4(\mathbb A)$ such that  
$\pi$ (resp. $\pi'$) has trivial central character, the archimedean component 
$\pi_\infty$ (resp. $\pi'_\infty$) is a holomorphic discrete series representation with 
scalar minimal $K$-type $( k_1, k_1 )$ [resp. $( k_2, k_2 )$] and
for each finite place $p$, the local representation $\pi_p$ [resp. $\pi_p'$]
is unramified. Here $\mathbb A$ is the ring of adeles of $\Q$.  
The real Weil group $W_\R$ is given by $\C^\times \sqcup j \C^\times$ 
such that $j^2 = -1$ and $ j z j^{-1} = \overline{z}$ for $z \in \C^\times$.  
Then the real Weil group representations underlying Siegel modular
forms $F$ and $G$ of weights $k_1$ and $k_2$ respectively
are given by (see page 90 of \cite{PSS} and page 2397 of \cite{RS})
$\varphi_{2k_1 - 3} \oplus \varphi_1$ and $\varphi_{2k_2 - 3} \oplus \varphi_1$,
where for $k \in \N$, $\varphi_k$ is defined by 
$$
\varphi_k : 
\C^\times \ni re^{i\theta} 
\mapsto 
\left[
\begin{matrix}
e^{ik\theta} &  \\
 		 & e^{-ik\theta}
\end{matrix}
\right], 
\phantom{m}
j 
\mapsto 
\left[
\begin{matrix}
    &  (-1)^k \\
1  & 
\end{matrix}
\right].
$$  
Then the parameter of $\pi_\infty \times \pi'_\infty$ is 
\begin{eqnarray*}
( \varphi_{2k_1 - 3} \oplus \varphi_1 )  
\otimes 
(\varphi_{2k_2 - 3} \oplus \varphi_1) 
 =  
\begin{cases}
\varphi_{2k_1 + 2k_2 - 6} \oplus \varphi_{2(k_1 - k_2)} \oplus \varphi_{2k_1 - 2} 
\oplus \varphi_{2k_1 - 4} \\
\tab 
\oplus \varphi_{2k_2 - 2} \oplus \varphi_{2k_2 - 4} 
\oplus \varphi_2 \oplus \varphi_+ \oplus \varphi_-  
& \text{ if } k_1 > k_2 \\
\varphi_{4k_1 - 6} \oplus \varphi_+ \oplus \varphi_- \oplus \varphi_{2k_1 - 2} 
\oplus \varphi_{2k_1 - 4} \\
\tab
\oplus \varphi_{2k_1 - 2} \oplus \varphi_{2k_1 - 4} 
\oplus \varphi_2 \oplus \varphi_+ \oplus \varphi_-  
& \text{ if } k_1 = k_2. 
\end{cases}
\end{eqnarray*}
Here $\varphi_+ $ and $\varphi_-$ are given by 
\begin{eqnarray*}
\varphi_+ :  
re^{i\theta} \mapsto 1, 
&
j \mapsto 1;\\
\varphi_- :  
re^{i\theta} \mapsto 1, 
&
j \mapsto  - 1.
\end{eqnarray*}
Now from \cite[Table 2]{RS}, one can easily see that the 
gamma factors of $L(F \times G, s)$ are as follows: 
\begin{eqnarray*}
L_\infty(F\times G, s) 
:= 
\begin{cases}
\Gamma_\C(s + k_1 + k_2 - 3) \Gamma_\C(s + k_1 - k_2) 
\Gamma_\C(s + k_1 - 1) \Gamma_\C(s + k_1 - 2) \\
\tab 
 \Gamma_\C(s + k_2 - 1) \Gamma_\C(s + k_2 - 2) 
\Gamma_\C(s + 1) \Gamma_\R(s) \Gamma_\R(s + 1) 
& \text{ if } k_1 > k_2, \\
&\\
\Gamma_\C(s + 2k_1 - 3) \Gamma_\C^2(s + k_1 - 1) 
\Gamma_\C^2(s + k_1 - 2) \\
\tab 
\Gamma_\C(s + 1) \Gamma_\R^2(s) \Gamma_\R^2(s + 1) 
& \text{ if } k_1 = k_2,
\end{cases}
\end{eqnarray*}
where $\Gamma_\R(s) := \pi^{-s/2} \Gamma(s/2)$ and 
$\Gamma_\C(s) := 2 ( 2\pi )^{-s} \Gamma(s)$.
Again by \cite[Theorem 5.2.3]{PSS}, we know that the 
completed $L$-function 
$$
L^*(F \times G, s)
:= 
L_\infty(F\times G, s) L(F\times G, s)
$$
satisfies the functional equation 
$$
L^*(F\times G, 1 - s) 
= 
\epsilon(F \times G, s) L^*(F\times G, s),
$$
where $ \epsilon(F \times G, s) \in \C$ and has absolute value $1$.
Thus for any $s\in \C$ with $\sigma > 1$, we have 
$$
\left| L( F \times G, 1 - s) \right|
= 
\left| 
\frac{L_\infty(F \times G, s)}{L_\infty(F \times G, 1- s)}\right| 
\cdot \left| L( F \times G, s) \right|.
$$
Note that for $s = c + it$ with $1 < c  < 3/2$, we have 
$$
\left| \frac{L_\infty(F \times G, c + it)}{L_\infty(F \times G, 1- c - it)} \right| 
\ll 
k_1^{6(2c - 1)} | 1 + it |^{8(2c - 1)}.
$$
Let $c = 1+ \epsilon$ with $0 < \epsilon < 1/2$.
Since $| L(F \times G, 1 + \epsilon + it ) | \ll_\epsilon 1$,
for any $0 < \delta < 1$, using \propref{Rademacher}, we have 
$$
| L(F \times G, \delta + it ) | 
\ll
k_1^{6(1 - \delta + \epsilon)} 
| 3 + it |^{8(1 - \delta + \epsilon)}.  
$$
This completes the proof of the lemma.
\end{proof}

Now we are ready to prove \thmref{prop1}.

\bigskip
\noindent
{\bf Proof of \thmref{prop1}.}
From the work of Weissauer \cite{RW} one knows that the generalized 
Ramanujan-Petersson conjecture is true for $F$ and $G$ and so for any 
$\epsilon > 0$, one has 
$$
\lambda_F(n)\lambda_G(n)\ll n^{\epsilon}. 
$$
Hence by the Perron's summation formula, we have 
$$
\sum_{n \le x} \lambda_F(n)\lambda_G(n)
= 
\frac{1}{2\pi i} \int_{1 + \epsilon - iT}^{1 + \epsilon + iT}
L(F, G; s) \frac{x^s}{s}ds
+ 
O\left( \frac{x^{1 + 2\epsilon}}{T} \right).
$$
Now we shift the line of integration to $1/2< \Re(s):= \delta <1$ 
(to be chosen later). Since there are no singularities of the 
function $L(F,G; s)x^s / s$ 
in the region bounded by the lines joining the points 
$1+\epsilon-iT, 1+\epsilon+iT, \delta+iT$ and $\delta-iT$, 
we have 
$$
\sum_{n \le x} \lambda_F(n)\lambda_G(n)
= 
I_1 + I_2 + I_3
+ 
O\left( \frac{x^{1 + 2\epsilon}}{T} \right),
$$
where 
\begin{eqnarray*}
I_1 
& := & 
\frac{1}{2\pi i}\int_{\delta-iT}^{{\delta+iT}}
L(F,G; s)\frac{x^s}{s}ds, \tab 
I_2 
:= 
\frac{1}{2\pi i}\int_{\delta+iT}^{{1+\epsilon+iT}}
L(F,G; s)\frac{x^s}{s}ds \\
&& 
\tab \text{ and }\tab
I_3 
:= 
\frac{1}{2\pi i}\int_{1+\epsilon-iT}^{{\delta-iT}}
L(F,G; s)\frac{x^s}{s}ds.
\end{eqnarray*}
Using \lemref{lem19} and \lemref{lem20}, one can 
easily get 
$$
I_1 
\ll_{\epsilon} 
( \delta - 1/2)^{-A} k^{6(1 - \delta + \epsilon)}
x^\delta T^{8(1 - \delta + \epsilon)},
$$
where $k = \text{ max}(k_1, k_2)$.
Similarly, one can get 
$$
I_2, I_3 
\ll_{\epsilon} 
(\delta - 1/2)^{-A} k^{6(1 - \delta + \epsilon)}
x^{1 + \epsilon} T^{8(1 - \delta + \epsilon) - 1}.
$$
We shall put $T=x^\alpha$, where $\alpha>0$ is a real number 
to be chosen later. Thus we have 
$$
\sum_{n \le x} \lambda_F(n) \lambda_G(n) 
\ll_\epsilon
(\delta - 1/2)^{-A} k^{6(1 - \delta + \epsilon)}
\left(
x^{8\alpha( 1 - \delta + \epsilon) + \delta} 
+ 
x^{1 + 8\alpha( 1 - \delta + \epsilon) - \alpha + \epsilon} 
+ 
x^{1 - \alpha + \epsilon}
\right).
$$
Choosing $\alpha = 1/16$ and $\delta = 15/16$, one has 
$$
\sum_{n \le x} \lambda_F(n)\lambda_G(n)
\ll_\epsilon 
k^{3/8 + \epsilon} x^{31/32 + \epsilon}.
$$
This completes the proof of \thmref{prop1}. \qed

\bigskip
\noindent
{\bf Proof of \corref{th1}.}
We know from \thmref{DKS} and \thmref{prop1} that
$$ 
\sum_{n\le x} \lambda^2_F(n) 
~=~
c_Fx  + O(x^{\frac{31}{32}})
\phantom{m}
\text{and}
\phantom{m}
\sum_{n \le x} \lambda_F(n)\lambda_G(n) 
~=~ O(x^{\frac{31}{32}}),
$$
where $c_F>0$. Suppose that $k_2 \le k_1$.
Using partial summation, we get
\begin{equation}\label{eq-sum}
\sum_{n\le x} \mu^2_F(n) 
~=~
cx^{2k_1-2}  + O(x^{2k_1 -2 - \frac{1}{32}})
\phantom{m}
\text{and}
\phantom{m}
\sum_{n \le x} \mu_F(n)\mu_G(n) 
~=~ O(x^{k_1 + k_2-2 - \frac{1}{32}}),
\end{equation}
where $c= \frac{c_F}{2k_1-2}$. Now
let
$$
S(x) 
:=
\sum_{n \le x} [ \mu_F(n) - \mu_G(n) ] \mu_F(n).
$$
Note that for any $\epsilon > 0 $, we have
$$
S(x) 
\le 
c(\epsilon)\cdot
\# \{ n \le x ~|~ n \in \N, ~\mu_F(n) \ne \mu_G(n) \}
x^{2k_1 - 3 + \epsilon} ,
$$
where $c(\epsilon) > 0$ is a constant depending only on $\epsilon > 0$. 
Now by applying \eqref{eq-sum}, we conclude that 
$$
\# \{ n \le x ~|~ n \in \N,~ \mu_F(n) \ne \mu_G(n) \} 
\gg_{F, G, \epsilon} 
x^{1 - \epsilon}.
$$
When $k_1 \le k_2$, we consider the sum
$\sum_{ n \le x} [ \mu_G(n) - \mu_F(n) ] \mu_G(n)$
and proceed as above to get the result.
This completes the proof of \corref{th1}. 

\begin{rmk}
To prove \corref{th1}, we have only used the property 
$$
\sum_{n \le x} \lambda_F(n)\lambda_G(n) 
= o(x),
$$
as $x \to \infty$ but \thmref{prop1} gives an 
explicit upper bound 
and hence is also of independent interest. 
We also note that the \corref{th1} is weaker than the optimal one.
In fact, using identities \eqref{square-prime}, \eqref{prime-no-th}
and the Weissauer bound and proceeding along the same line
of the proof of \corref{th1}, we get
$$
\#\{ p \le x ~|~  p~ \text{prime},~\mu_F(p) \ne \mu_G(p) \}
\ge
\frac{1}{32}\cdot \frac{x}{\log x}.
$$
However our proof follows without appealing to prime number theorem. 
\end{rmk}

\smallskip

\section{Proofs of \thmref{th3} and \thmref{th4}}

\smallskip

In this section, we complete the proofs of \thmref{th3} and \thmref{th4}. 
Let us start with the following lemma. 

\begin{lem}\label{lower-density}
Let $F \in S_{k_1}(\Gamma_2)$ and $G \in S_{k_2}(\Gamma_2)$ 
be Hecke eigenforms in the orthogonal complement of the 
Maass subspace and having normalized eigenvalues 
$\{ \lambda_F(n)\}_{n \in\N}$ 
and $\{ \lambda_G(n) \}_{n\in \N}$ respectively. 
Also assume that $F$ and $G$ lie in different eigenspaces
and there exists $0 < c < 4$
such that
$$
\# \{ p \le x ~|~ |\lambda_G(p)| > c \}
~\ge~ 
\frac{16}{17}\cdot \frac{x}{\log x}
$$
for sufficiently large $x$. Then we have 
$$
\sum_{p \le x} \lambda_F^2(p)\lambda_G^2(p) 
\gg \frac{x}{\log x}.
$$
\end{lem}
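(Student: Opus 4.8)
The plan is to leverage the hypothesis on $\lambda_G$ so as to reduce the assertion to the standard fact that $\sum_{p\le x}\lambda_F^2(p)\sim x/\log x$, and then to finish with a short counting argument whose numerology is governed precisely by the constant $\tfrac{16}{17}$. As a preliminary step I would record the asymptotic
$$
\sum_{p\le x}\lambda_F^2(p)=(1+o(1))\,\frac{x}{\log x}\qquad(x\to\infty).
$$
Indeed, from \eqref{Spinor} one has $\lambda_F(p)=\sum_{i=1}^4\alpha_{p,i}$, so the Dirichlet coefficient of $L(F\times F,s)$ at a prime $p$ equals $\sum_{1\le i,j\le4}\alpha_{p,i}\alpha_{p,j}=\bigl(\sum_{i=1}^4\alpha_{p,i}\bigr)^2=\lambda_F^2(p)\ge0$. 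By \thmref{PSS} applied with $G=F$ (which is exactly the exceptional case $k_1=k_2$, $\mu_F=\mu_G$), the function $L(F\times F,s)$ continues meromorphically to $\C$, has a \emph{simple} pole at $s=1$, and is non-vanishing on $\Re(s)=1$; hence $L(F\times F,s)/\zeta(s)$ is holomorphic and non-vanishing for $\Re(s)\ge1$. The prime number theorem for $L(F\times F,s)$ (a routine Tauberian argument, with the Ramanujan bound $|\alpha_{p,i}|=1$ of \cite{RW} used to absorb the prime-power terms) then yields the displayed asymptotic with leading constant exactly $1$.

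With this in hand the rest is elementary. On the set $\{p\le x:|\lambda_G(p)|>c\}$ one has $\lambda_G^2(p)>c^2$, so
$$
\sum_{p\le x}\lambda_F^2(p)\lambda_G^2(p)\ \ge\ c^2\sum_{\substack{p\le x\\ |\lambda_G(p)|>c}}\lambda_F^2(p)\ =\ c^2\Bigl(\sum_{p\le x}\lambda_F^2(p)\ -\ \sum_{\substack{p\le x\\ |\lambda_G(p)|\le c}}\lambda_F^2(p)\Bigr).
$$
For the last sum I would invoke the Ramanujan bound $|\lambda_F(p)|\le4$, so $\lambda_F^2(p)\le16$, together with the hypothesis: since $\#\{p\le x:|\lambda_G(p)|>c\}\ge\tfrac{16}{17}\cdot\tfrac{x}{\log x}$ and $\#\{p\le x\}=(1+o(1))\tfrac{x}{\log x}$, the set of ``bad'' primes $\{p\le x:|\lambda_G(p)|\le c\}$ has at most $\bigl(\tfrac{1}{17}+o(1)\bigr)\tfrac{x}{\log x}$ elements, whence $\sum_{p\le x,\,|\lambda_G(p)|\le c}\lambda_F^2(p)\le\bigl(\tfrac{16}{17}+o(1)\bigr)\tfrac{x}{\log x}$. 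Combining this with the preliminary asymptotic gives
$$
\sum_{p\le x}\lambda_F^2(p)\lambda_G^2(p)\ \ge\ c^2\Bigl(1-\tfrac{16}{17}+o(1)\Bigr)\frac{x}{\log x}=c^2\Bigl(\tfrac{1}{17}+o(1)\Bigr)\frac{x}{\log x}\ \gg\ \frac{x}{\log x}
$$
for all sufficiently large $x$, which is the claim.

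The only substantive ingredient is the preliminary asymptotic, and the whole proof hinges on the bookkeeping: the hypothesis confines the bad primes to a set of size at most $(\tfrac{1}{17}+o(1))\,x/\log x$, on which $\lambda_F^2$ contributes at most $16\cdot\tfrac{1}{17}=\tfrac{16}{17}<1$ times $x/\log x$, so a positive proportion of the total mass $\sim x/\log x$ carried by $\sum_{p\le x}\lambda_F^2(p)$ is forced to lie on the good primes. Thus the point one must not economize on is that the preliminary asymptotic holds with leading constant $1$ (a mere lower bound $\gg x/\log x$ with an uncontrolled constant would not suffice); this is classical but does genuinely use the meromorphic continuation, the simple pole at $s=1$, and the non-vanishing on $\Re(s)=1$ of $L(F\times F,s)$ from \thmref{PSS}, together with a Tauberian theorem. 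Alternatively one could obtain it from \thmref{DKS} and \lemref{lem19} (taken at $G=F$) via a contour shift, but the $L(F\times F,s)$ route is cleanest.
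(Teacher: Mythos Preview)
Your proof is correct and follows essentially the same route as the paper: split the sum over primes according to whether $|\lambda_G(p)|>c$, bound the contribution of the complementary set by $16$ times its cardinality, and compare with the asymptotic $\sum_{p\le x}\lambda_F^2(p)\sim x/\log x$. The only difference is in how that asymptotic is justified: the paper obtains it by combining the transfer result of \cite[Theorem~5.1.2]{PSS} with the prime number theorem for automorphic representations of \cite[Theorem~3]{WY}, whereas you extract it directly from \thmref{PSS} (in the case $G=F$) together with a Tauberian argument; both are valid and amount to the same input.
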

\begin{proof}
Note that by \cite[Theorem 5.1.2]{PSS}, one knows that 
the transfers of 
$F$ and $G$ are irreducible unitary cuspidal 
and self-contragredient 
automorphic representations of $\rm GL_4(\mathbb{A})$. 
Hence by \cite[Theorem 3]{WY}, we have 
\begin{equation}\label{square-prime}
\sum_{p \le x}\lambda_F^2(p) 
= \frac{x}{\log x} + o \left( \frac{x}{\log x} \right)
\phantom{m}\text{ and }\phantom{m}
\sum_{p \le x}\lambda_G^2(p) 
= \frac{x}{\log x} + o \left( \frac{x}{\log x} \right),
\end{equation}
as $x \to \infty$. Let $S$ be the set of primes $p$ such that 
$|\lambda_G(p)| > c$. 
Thus for sufficiently large $x$, we have 
$$
\sum_{p \le x}\lambda_F^2(p)\lambda_G^2(p) 
~>~ 
c^2 \sum_{p \le x, \atop p \in S} \lambda_F^2(p).
$$
By the given hypothesis, the set 
$$
\sum_{p \le x, \atop {p \notin S} } 
\lambda_F^2(p) 
\le 
16 \cdot \#\{ p \le x ~|~ p \notin S\}
\le
\frac{16}{17}\cdot \frac{x}{\log x}
$$ 
for sufficiently large $x$. This implies that
$$
\sum_{p \le x}\lambda_F^2(p)\lambda_G^2(p) 
\gg \frac{x}{\log x}
$$
for sufficiently large $x$.
This completes the proof of \lemref{lower-density}.
\end{proof}

We now complete the proof of \thmref{th4} and then use \thmref{th4} 
to complete the proof of \thmref{th3}.

\subsection{Proof of \thmref{th4} }
Using \cite[Theorem 5.1.2]{PSS}, we know that the transfers of 
$F$ and $G$ are irreducible unitary cuspidal and self-contragredient 
automorphic representations of $\rm GL_4(\mathbb{A})$. 
Hence by \cite[Theorem 3]{WY}, we have 
\begin{equation}\label{prime-no-th}
\sum_{p \le x} \lambda_F(p)\lambda_G(p) 
= o \left( \frac{x}{\log x} \right),
\end{equation}
as $x \to \infty$. Consider the sum
$$
S^+(x) 
:= 
\sum_{p \le x} 
[\lambda_F(p)\lambda_G(p) + 16] \lambda_F(p)\lambda_G(p).
$$
Observe that 
\begin{equation}\label{upper}
S^+(x) 
\le 
\sum_{p \le x, \atop \lambda_F(p)\lambda_G(p) > 0} 
[ \lambda_F(p)\lambda_G(p) + 16] \lambda_F(p)\lambda_G(p)
~\le~ 512 \cdot
\#\{ p \le x ~|~  \lambda_F(p)\lambda_G(p) > 0 \}.
\end{equation}
On the other hand, using \lemref{lower-density} and \eqref{prime-no-th}, 
we have for sufficiently large $x$
\begin{equation}\label{den}
S^+(x) 
~=~ 
\sum_{p \le x} \lambda_F^2(p)\lambda_G^2(p) 
+ 
16 \sum_{p \le x} \lambda_F(p)\lambda_G(p) 
~\gg~ 
\frac{x}{\log x}.
\end{equation}
Thus by \eqref{upper} and \eqref{den}, we conclude that there exists 
a set of primes having positive density such that $\lambda_F(p)\lambda_G(p) > 0$. 
Similarly, by considering the sum 
$$
S^-(x)
:=
\sum_{p \le x} 
[ \lambda_F(p)\lambda_G(p) - 16] \lambda_F(p)\lambda_G(p)
$$
and arguing as above one can conclude that there exists a set of primes 
having positive density such that $\lambda_F(p)\lambda_G(p) < 0$. \qed

\subsection{Proof of \thmref{th3}}

It follows from \thmref{KS-th} that there exists $\delta>0$
such that
\begin{eqnarray*}
\#\{ p \le x ~|~ \lambda_F(p)\lambda_G(p) = 0\} 
&\le& 
\#\{ p \le x ~|~ \lambda_F(p) = 0\} 
~+~
\#\{ p \le x ~|~ \lambda_G(p) = 0\} \\
&=& 
O\left(\frac{x}{(\log x)^{1+\delta}}\right)
\end{eqnarray*}
for sufficiently large $x$. Also by \thmref{th4}, we know that
the set $\{ p \in \mathcal P ~|~ \lambda_F(p)\lambda_G(p) < 0 \}$ 
has positive lower density. Hence the multiplicative function 
$\lambda_F(n)\lambda_G(n)$ 
satisfies the hypothesis of \lemref{MR}. We now apply \lemref{MR} to 
complete the proof of \thmref{th3}. \qed

\begin{rmk}
Let $F, G$ be elliptic non-CM cusp forms 
of weights $k_1, k_2$ and levels $N_1, N_2$ respectively. 
Also let $F$ and $G$ be distinct Hecke eigenforms
with eigenvalues $\{ \mu_F(n) \}_{n \in \N}$ and 
$\{ \mu_G(n) \}_{n \in \N}$ respectively.
Then the method adopted here for \thmref{th3} 
can be applied to prove unconditionally that
half of the non-zero coefficients 
of the sequence $\{\mu_F(n) \mu_G(n)\}_{n \in \N}$ 
are positive and half of them are negative. One
can also show unconditionally that there exists a set of 
primes $p$ of positive 
lower density such that $\mu_F(p)\mu_G(p) \gtrless 0$.
\end{rmk}

\bigskip
\noindent
{\bf Acknowledgment:} We would like to thank Ralf Schmidt for 
sending us his paper and useful comments.

\end{document}